\documentclass[11pt]{a_a}

\usepackage{amssymb}
\usepackage [russian,english]{babel}

\theoremstyle{plain}

\newtheorem{lemma}{Lemma}

\theoremstyle{definition}
\newtheorem{definition}{Definition}

\newtheorem{remark}{Remark}

\theoremstyle{plain}
\newtoks\thehProclaim
\newtheorem*{Proclaim}{\the\thehProclaim}
\newenvironment{proclaim}[1]{\thehProclaim{#1}\begin{Proclaim}}{\end{Proclaim}}

\theoremstyle{definition}
\newtoks{\thehRemark}
\newtheorem*{Remark}{\the\thehRemark}

\renewcommand{\leq}{\leqslant}
\renewcommand{\geq}{\geqslant}

\begin{document}

\title[$C^1$-nondensity of OSP]{Nondensity of orbital shadowing property in $C^1$-topology.}

\author{Osipov A.V.}

\begin{abstract}
The orbital shadowing property (OSP) of discrete dynamical systems on smooth closed manifolds is considered. Nondensity of OSP with respect to the $C^1$-topology is proved. The proof uses the method of skew products developed by Yu.S. Ilyashenko and A.S. Gorodetski.
\end{abstract}

\maketitle

\section{Introduction}

The theory of shadowing studies the problem of closeness of approximate and exact trajectories (or orbits) of dynamical systems on unbounded time intervals. This problem is important both for applications (as a rule, approximate trajectories generated by computer simulation of a system are considered) and for the qualitative theory of dynamical systems (shadowing properties can be considered as weak forms of structural stability). Note that we consider only discrete-time dynamical systems (cascades) generated by homeomorphisms of metric spaces and diffeomorphisms of closed smooth manifolds. In this paper, we do not distinguish between a homeomorphism and the dynamical system generated by this homeomorphism. Roughly speaking, a cascade has one of the shadowing properties if any ''sufficiently precise'' approximate trajectory is ''close'' to an exact one. Since the statement that approximate trajectories (pseudotrajectories) and exact trajectories are close can be formalized in various ways, there are several shadowing properties. Let us mention the pseudo orbit tracing property POTP, the orbital shadowing property OSP, and the weak shadowing property WSP. The state of the art of the theory of shadowing is described in the monographs~\cite{8,9}. Let us give exact definitions of the shadowing properties that are used in this paper.

Let $f$ be a homeomorphism of a metric space $M$ with metric dist. Let us recall the definitions of the exact trajectory of a point $p\in M$ of the homeomorphism $f$ and its positive and negative semitrajectories:
$$O(p,f)=\left\{f^k(p)\mid k\in\textbf{Z}\right\},$$
$$O_{+}(p,f)=\left\{f^k(p)\mid k\in\textbf{Z}, k\geq 0\right\},$$
$$O_{-}(p,f)=\left\{f^k(p)\mid k\in\textbf{Z}, k\leq 0\right\}$$
(hereinafter, we denote by $\textbf{Z}$ the set of integers).

For convenience, without additional remarks, we often use the notation
$$p_k=f^k(p)\quad\textrm{for }k\in\textbf{Z}.$$
In addition, we sometimes identify a periodic point with its trajectory, i.e. the set $O(p,f)$.

As usual, we say that a sequence $\xi=\{x_k\}\subset M$ is a $d$-pseudotrajectory if 
$$\textrm{dist}(x_{k+1},f(x_k))<d\qquad\textrm{for}\ k\in\textbf{Z}.$$
Thus, a $d$-pseudotrajectory is one of possible formalizations of the notion of an approximate trajectory.

We say that the homeomorphism $f$ of the space $M$ has POTP (pseudo orbit tracing property) if for any $\epsilon>0$ there exists a $d$ such that for any $d$-pseudotrajectory $\xi=\{x_k\}$ one can find a point $q\in M$ such that
$$\textrm{dist}(x_k,f^k(q))< \epsilon\quad\textrm{ for }k\in\textbf{Z}.$$
In other words, POTP means that any ''sufficiently precise'' approximate trajectory is shadowed by an exact trajectory (i.e. is pointwise close to it).

By $N(\epsilon,A)$ denote the $\epsilon$-neighborhood of a set $A\subset  M$. In the paper \cite{7}, definitions of the orbital shadowing property (OSP) and the weak shadowing property (WSP) are given. We say that the homeomorphism $f$ of the space $M$ has OSP and write $f\nobreak\in\nobreak\textrm{OSP}$ if for any $\epsilon>0$ there exists a $d>0$ such that for any $d$-pseudotrajectory $\xi$ one can find a point $q\in M$ such that 
\begin{equation}
\label{1.1}
\xi\subset N(\epsilon,O(q,f))\qquad\textrm{and}\qquad O(q,f)\subset N(\epsilon,\xi).
\tag{1.1}
\end{equation}
We say that the homeomorphism $f$ of the space $M$ has WSP if for any $\epsilon>0$ there exists a $d>0$ such that for any $d$-pseudotrajectory $\xi$ one can find a point $q\in M$ such that 
$$\xi\subset N(\epsilon,O(q,f)).$$

OSP is a weak analog of POTP: the difference is that we do not require a point $x_k$ of a pseudotrajectory $\xi=\{x_k\}$ and the point $f^k(q)$ of an exact trajectory $O(q,f)$ to be close ''at any time moment'', instead, the sets of the points of the pseudotrajectory $\xi$ and the trajectory $O(q,f)$ are required to be close. The weak shadowing property WSP is a weak variant of OSP: the difference is that a set of points of a ''sufficiently precise'' pseudotrajectory $\xi$ is required to be contained in a small neighborhood of some exact trajectory $O(q,f)$.

Let $M$ be a closed smooth manifold. As usual, denote by $\textrm{Diff}^1(M)$ the set of diffeomorphisms of the manifold $M$ with the $C^1$-topology (cf., e.g., \cite{16} for definition).

Generic properties are objects of a special interest in the theory of dynamical systems. We say that a property is generic if it holds for all cascades from a Baire second category set (cf., e.g., \cite{16} for definition) in a space of dynamical systems with a certain topology, and we say that a property is dense if it holds for all cascades from a dense set. In the paper \cite{14}, S. Yu. Pilyugin and O. B. Plamenevskaya proved the genericity of POTP with respect to the $C^0$-topology if the phase space is a closed smooth manifold (hereinafter, we consider, precisely, this case). The genericity of POTP with respect to the $C^0$-topology implies the $C^0$-genericity of OSP and WSP. Ch. Bonatti, L. J. Diaz and G. Turcat \cite{13} proved that POTP is nondense with respect to the $C^1$-topology, and S. Crovisier \cite{15} proved that WSP is $C^1$-dense (also, cf., the paper of S. Yu. Pilyugin, K. Sakai and O. A. Tarakanov \cite{12}).
      
Our main goal is to prove the $C^1$-nondensity (and, therefore, the $C^1$-nongene\-ricity) of OSP, which takes the ''intermediate'' position between WSP and POTP. As usual, denote by $S^2$ the two-dimensional sphere and by $S^1$ the circle. Our main result is the following theorem:

\begin{proclaim}{Theorem A} 
There exists a domain $W\subset\textrm{Diff}^1(S^2\times S^1)$ such that any diffeomorphism $f\in W$ does not have OSP.   
\end{proclaim}

In order to prove it, we use an idea originating in works of A. S. Gorodetski and Yu. S. Ilyashenko: to construct the example in a class of partially hyperbolic skew products. To be precise, we consider a certain step skew product $G_0$ over the Bernoulli shift $\sigma$ with the fibre homeomorphic to the circle (all necessary definitions are given later). Having realized the Bernoulli shift as a mapping of the Smale horseshoe that is sufficiently fast contracting and expanding compared with the fibre dynamics, we see that a local maximal partially hyperbolic set  with center fibres homeomorphic to the circle corresponds to this skew product. Furthermore, the technique of Hirsch-Pugh-Shub-Gorodetski (cf.~\cite{5,6,11}) implies that the partially hyperbolic set persists under small perturbations of this smooth realization remaining the product of the circle and the Smale horseshoe. And, due to Holder dependence of (individually smooth) center fibres on the point in the base (i.e., in the Smale horseshoe), this skew product is a Holder mild skew product. 

We take a sufficiently small $C^1$-neighborhood of the step skew product $G_0$ as the required neighborhood $W$ from Theorem A. In particular, the neighborhood is chosen so small that any diffeomorphism from $W$ is assigned to some mild skew product. Further, we show that Theorem A can be reduced to Theorem $\textrm{A}^{\prime}$ (which will be exactly formulated in the next section).

\begin{proclaim}{Theorem $\textrm{A}^{\prime}$}
Any Holder mild skew product ''sufficiently close'' to the skew product $G_0$ does not have OSP.
\end{proclaim}  

The proof of Theorem $\textrm{A}^{\prime}$ is split into two cases. The first case (Case (A1)) corresponds to the situation when there exist two hyperbolic periodic points $p$ (with the one-dimensional unstable manifold) and $q$ (with the one-dimensional stable manifold) such that this manifolds intersect. In this case, using Main Lemma, we construct a pseudotrajectory that can not be orbitally shadowed by any exact trajectory. 

The second case (Case (A2)) corresponds to the situation when there are no such intersections. In this case, we construct a pseudotrajectory such that any exact trajectory that orbitally shadows the pseudotrajectory should be the heteroclinic trajectory connecting two hyperbolic periodic points with the one-dimensional unstable and the one-dimensional stable manifolds, respectively. The assumption that the cascade has OSP contradicts to the assumption about the absense of such intersections.

Let us describe the further structure of the paper. In Sec. 2, main definitions are given, main properties of skew products are described, and it is shown that Theorem A can be reduced to Theorem $\textrm{A}^{\prime}$. In Sec. 3, Lemma~1 (Main Lemma), which plays a significant role in the proof of Theorem $\textrm{A}^{\prime}$, is formulated and proved. In Sec. 4, it is shown that the proof of Theorem~$\textrm{A}^{\prime}$ can be reduced to consideration of two cases: Case (A1) and Case (A2), and Case (A1) is proved. In addition, a scheme of the proof of Theorem $\textrm{A}^{\prime}$ is briefly outlined at the beginning of Sec. 4. In Sec. 5, two auxiliary lemmas on properties of the skew products under consideration, which are necessary for the proof of Case (A2), are formulated and proved. In Sec. 6, Case (A2) is proved with an exception of Lemma 6. Lemma 6 is proved in Sec. 7, which consists of 4 subsections. In Subsec. 7.1, main notions required for the proof of Lemma 6 are inroduced. In Subsec. 7.2, outlines of proofs of Lemmas 8 and 9 playing a key role in the proof of item (6.c) of Lemma 6 are given. In Subsec. 7.3, the proof of item (6.c) is completed; and, finally, in Subsec. 7.4, the remaining items of Lemma 6 are proved.     


\section{Dynamical properties of skew products}

Let us give main definitions.

By $\Sigma^2$ denote the space of all two-sided sequences of 0 and 1 with the metric
$$d_{\Sigma^2}(\omega,\omega^{\prime})=1/2^{k},$$
where $k\geq 0$ is the minimal integer number such that if $\omega=\ldots\beta_{-1}|\beta_0\beta_1\ldots$ and $\omega^{\prime}=\ldots\beta_{-1}^{\prime}|\beta_0^{\prime}\beta_1^{\prime}\ldots$,
then 
$$\beta_{-k-1}\neq\beta_{-k-1}^{\prime}\qquad\textrm{ or }\qquad\beta_{k}\neq\beta_{k}^{\prime},$$
and the sign $|$ means that the next symbol stands at the zero position. The sign $|$ is used further in the paper. Let us recall the definition of the Bernoulli shift $\sigma\nobreak:\nobreak\Sigma^2\mapsto\Sigma^2$:
$$\sigma(\ldots\beta_{-1}|\beta_0\beta_1\beta_2\ldots)=\ldots\beta_{-1}\beta_0|\beta_1\beta_2\ldots.$$

In the paper~\cite{1}, the following definitions are introduced:

\begin{definition}
Fix two diffeomorphisms $f_0$ and $f_1$ of the circle $S^1$. A step skew product is a mapping $G:\Sigma^2\times S^1\mapsto\Sigma^2\times S^1$ such that
$$G(\omega,\phi)=(\sigma(\omega), f_{\omega_0}(\phi))\qquad\textrm{for}\ \omega\in\Sigma^2,\phi\in S^1,$$
where $\omega_0$ is the symbol standing at the zero position of the sequence $\omega$. 
\end{definition}  

\begin{definition}
Fix a family of diffeomorphisms $f_{\omega}$ of the circle $S^1$ that is parameterized by two-sided sequences $\omega\in\Sigma^2$. A mild skew product is a mapping $G:\Sigma^2\times S^1\mapsto\Sigma^2\times S^1$ such that
$$G(\omega,\phi)=(\sigma(\omega), f_{\omega}(\phi))\qquad\textrm{for}\ \omega\in\Sigma^2,\phi\in S^1.$$
\end{definition}

Let us emphasize that in Definition 1, the choice of the diffeomorphism $f_j$ is completely determined by the symbol $\omega_0$ standing at the zero position of the sequence $\omega$, whereas in Definition 2, the choice depends on the whole sequence $\omega$. By $g$ denote a diffeomorphism of the sphere $S^2$ that has a standard Smale horseshoe. It is well known that the mapping $g$ has a locally maximal invariant subset $\Lambda$ homeomorphic to the set $\Sigma^2$, and that the restriction of the mapping $g$ to the set $\Lambda$ is topologically conjugate with the Bernoulli shift $\sigma$ (cf., e.g.,~\cite{2}).

It is well known (cf., e.g.,~\cite{1}) that the diffeomorphism $g:S^2\mapsto S^2$ can be considered as a mapping $g:D_0\cup D_1\mapsto D^\prime_0\cup D^\prime_1$, where $D_0$ and $D_1$ are disjoint horizontal rectangles, and $D_0^{\prime}$ and $D_1^{\prime}$ are disjoint vertical rectangles. In the following definition (which is also taken from the paper~\cite{1}), we extend a step skew product to the set $(D_0\cup D_1)\times S^1$:

\begin{definition} 
The smooth realization of a step skew product $G$ is a smooth mapping $F:(D_0\cup D_1)\times S^1\mapsto\allowbreak(D^{\prime}_0\nobreak\cup\nobreak D^{\prime}_1)\nobreak\times\nobreak S^1$ such that
$$F(x,\phi)=(g(x),f_{x}(\phi))\qquad\textrm{for}\ x\in D_0\cup D_1,\ \phi\in S^1,$$
$$\textrm{where}\ f_{x}=f_j\qquad\textrm{for}\ x\in D_j,\ j\in\{0,1\},$$
and $f_0$ and $f_1$ are the diffeomorphisms from the definition of the step skew product $G$.
\end{definition}

The smooth realization $F$ of a skew product $G$ can be smoothly extended to a diffeomorphism of the manifold $M=S^2\times S^1$. We denote this extension by $F$ again, and, hereinafter, we understand by a smooth realization precisely a diffeomorphism of the manifold $M$. It is easily seen that the diffeomorphism $F$ has a locally maximal invariant set such that the dynamics on this set coincides with the dynamics of the initial skew product $G$. 

Let $g_0$ be the rotation of the circle $S^1$ by small angle $b<1/100$. Let $g_1$ be an orientation preserving diffeomorphism whose non-wandering set consists only of two fixed points: an attractor $p$ and a repeller $q$. As usual, we consider $S^1$ as \textbf{R}/\textbf{Z}. We assume that the mapping $g_1$ is the linear expansion with some constant $a>1$ in the neighborhood of the point $q=0$ of radius $1/8$  and the linear contraction with the constant $1/a$ in the neighborhood of the point $p=1/2$ of radius $1/8$. As usual, we denote by id the identity map. In addition, we assume that the diffeomorphisms $g_0$ and $g_1$ satisfy the formula
$$\textrm{dist}_{C^1}(g_j,\textrm{id})<\delta_0\quad\textrm{for}\ j\in\{0,1\},$$
where $\delta_0$ is a sufficiently small number (we will impose restrictions on the size of $\delta_0$ further, in Theorem 2). By $G_0$ denote the step skew product generated by the diffeomorphisms $g_0$ and $g_1$. $G_0$ is precisely the skew product discussed in Sec. 1.

The set $\Sigma^2$ is called the base, and any set of the form $\omega\times S^1$, where $\omega\in\Sigma^2$, is called a fibre. 
We denote by $pr:\Sigma^2\times S^1\mapsto \Sigma^2$ the natural projection onto the base. We say that the trajectories of points $p_1,p_2\in\Sigma^2\times S^1$ lie on different fibres if the trajectories of their projections to the base do not intersect. Any finite sequence of 0 or 1 is called a word.

A definition of a hyperbolic periodic point is given in the book \cite{16}. Let $p$ be a hyperbolic periodic point of a diffeomorphism $f$ of a manifold $M$. Let us define the sets
\begin{equation}
\label{2.1}
W^s(p)=\left\{q\in M | \exists r\in O(p,f): \textrm{dist}\left(f^{k}(q),f^k(r)\right)\longrightarrow 0,\ k\rightarrow+\infty\right\},
\tag{2.1}
\end{equation}
\begin{equation}
\label{2.2}
W^u(p)=\left\{q\in M | \exists r\in O(p,f): 
\textrm{dist}\left(f^{k}(q),f^k(r)\right)\longrightarrow 0,\ k\rightarrow-\infty\right\}.
\tag{2.2}
\end{equation}
For convenience, we call the sets defined by equalities $(\ref{2.1})$ and $(\ref{2.2})$ the stable and the unstable manifolds of the point $p$, respectively. Note that usually this sets are called the stable and unstable manifolds of the trajectory $O(p,f)$. We say that the point $p$ is a point of type $(m,n)$ if $$\textrm{dim}W^s(p)=m\quad\textrm{and}\quad\textrm{dim}W^u(p)=n.$$

A periodic point $p\in\Sigma^2$ of the Bernoulli shift $\sigma$ is called a hyperbolic periodic point of type (1,1). According to this definition, a hyperbolic periodic point of type (1,1) of the diffeomorphism $g\nobreak:\nobreak S^2\mapsto S^2$ corresponds to any hyperbolic periodic point $p\in\Sigma^2$. A periodic point $p=(\omega_0,\phi_0)\in\Sigma^2\times S^1$ is called a hyperbolic periodic point of type (2,1) (of type (1,2), respectively) of a mild skew product $G$ if it is a hyperbolic attracting (repelling) point of the diffeomorphism
$$\bar{G}_{\omega_0,\ \phi_0}:S^1\mapsto S^1,\quad \bar{G}_{\omega_0,\ \phi_0}(\phi):=pr_{S^1}G^{m_p}(\omega_0,\phi)\quad\textrm{for }\phi\in S^1,$$   
where $pr_{S^1}$ is a projection onto $S^1$, and $m_p$ is the period of the point $p$.

We use a result of Ilyashenko and Gorodetski on density of hyperbolic periodic points of different types, which is, in fact, a consequence of Theorem 2 in \cite{4}: 

\begin{proclaim}{Theorem 1 (Gorodetski, Ilyashenko)} For the diffeomorphisms $g_0$ and $g_1$ defined above and any numbers $C$ and $\alpha$ there exist neighborhoods $W_0(g_0)$ and $W_1(g_1)$ (in the $C^1$-topology) such that if a mild skew product $G$ (generated by diffeomorphisms $f_{\omega}$) satisfies the conditions:
\begin{equation}
\label{2.3}
f_{\omega}\in W_{\omega_0}\qquad\textrm{for}\ \omega\in\Sigma^2
\tag{2.3}
\end{equation}
(where $\omega_0$ is the symbol standing at the zero position of the sequence $\omega$);
\begin{equation}
\label{2.4}
L:=\max_{\omega\in\Sigma^2}\max_{\phi\in S^1}(||Df_{\omega}(\phi)||,||Df^{-1}_\omega(\phi)||)<2^{\alpha};
\tag{2.4}
\end{equation}
\begin{equation}
\label{2.5}
d_{C^0}(f_{\omega},f_{\omega^\prime})\leq C(d_{\Sigma^2}(\omega,\omega^\prime))^{\alpha}\qquad\textrm{for}\ \omega,\omega^\prime\in\Sigma^2,
\tag{2.5}
\end{equation}
where $d_{C^0}$ is the $C^0$-metric, then both hyperbolic periodic points of type (2,1) and hyperbolic periodic points of type (1,2) are dense in the set $\Sigma^2\times S^1$.
\end{proclaim}

Note that if diffeomorphisms $f_{\omega}$ satisfy relation $(\ref{2.5})$, then the mild skew product $G$ is called a Holder mild skew product. In fact, Theorem 1 states that hyperbolic periodic points of different types are dense for Holder mild skew products ''sufficiently close'' to the skew product~$G_0$. 

By $F_0$ denote the smooth realization of the step skew product $G_0$. The following theorem plays an important role in the proof of Theorem A. In fact, the theorem states that any diffeomorphism close to the diffeomorphism $F_0$    
has a local maximal invariant set such that the dynamics on this set coincides with the dynamics of some Holder skew product ''close'' to the skew product~$G_0$. 

\begin{proclaim}{Theorem 2}
Suppose that the diffeomorphisms $g_0$ and $g_1$ defined above are sufficiently $C^1$-close to the identity diffeomorphism (i.e, the number $\delta_0$ defined above is sufficiently small). Then there exist numbers $C$ and $\alpha$, and a neighborhood $W$ of the diffeomoprhism $F_0$ in the $C^1$-topology such that any diffeomorphism $F\in W$ has a local maximal invariant set $\Delta$, and $F|_{\Delta}$ is topologically conjugate with a mild skew product $G$, which satisfies all conditions of Theorem 1 (namely, conditions $(\ref{2.3})$, $(\ref{2.4})$ and $(\ref{2.5})$). 
\end{proclaim}

\begin{remark}
It seems that Theorem 2 has not been formulated anywhere, but in fact, its proof is given in the papers \cite{3,5,11}. In Theorem 2, a mild skew product $G$ is non-one-to-one assigned to a diffeomorphism $F$, which is sufficiently close to the identity. In the papers \cite{3,5,11}, it is proved that this correspondence is continuous. Note that the reasoning of the papers \cite{5,11}  implies that hyperbolic periodic points from $\Delta$ of diffeomorphisms from $W$ are assigned to hyperbolic periodic points of the same type of mild skew products. 
\end{remark} 

In order to prove the main result, we need the following notations.

For convenience, we denote by dist both the metric on the manifold $S^2\times S^1$ and the metric in the space $\Sigma^2\times S^1$. Let $p$ be a hyperbolic periodic point of a homeomorphism $f$ of a metric space $M$. The set defined by equality~$(\ref{2.1})$ (equality $(\ref{2.2})$, respectively)
is called the stable (respectively, the unstable) manifold of the point $p$. Choose a mild skew product $G$ from Theorem 2 as a homeomorphism $f$. Let $F$ be the diffeomorphism of the manifold $M$ that corresponds to the mild skew product $G$. Let us emphasize, that the sets $W^s(p)$ and $W^u(p)$ are not manifolds in spite of their names. However, the sets $W^s_F(\bar{p})$ and $W^u_F(\bar{p})$, the stable and unstable manifolds of the trajectory of the point $\bar{p}\in S^2\times S^1$ (which corresponds to the point $p\in \Sigma^2\times S^1$ in the sense of Theorem 2) with respect to the diffeomorphism $F$, are the manifolds. Define dimensions of the sets $W^s(p)$ and $W^u(p)$ by the formula
$$\textrm{dim}W^s(p):=\textrm{dim}W^s_F(\bar{p}),\qquad\textrm{dim}W^u(p):=\textrm{dim}W^u_F(\bar{p}).$$
Thus, if $p$ is a hyperbolic periodic point of type $(m,n)$, then $\textrm{dim}W^s(p)=m$ and $\textrm{dim}W^u(p)=n$.    
        
If $p$ is a periodic point of the Bernoulli shift $\sigma$, then the numbers $\textrm{dim}W^s(p)$ and $\textrm{dim}W^u(p)$ can be defined in the analogous way with the exception that a diffeomorphism $F$ should be changed by the diffeomorphism $g$ defined above. According to this definition, $\textrm{dim}W^s(p)=\textrm{dim}W^u(p)=1$, i.e, the point $p\in\Sigma^2$ is a hyperbolic periodic point of type (1,1).

Let $W$ be the neighborhood of the mapping $F_0$ from Theorem 2. In order to prove Theorem 2, it is sufficient to find a number $\delta^{\prime}$ such that $N(\delta^{\prime},F_0)\subset W$, and any diffeomorphism $F\in N(\delta^{\prime},F_0)$ does not have OSP. Let $\delta$ be an arbitrary number. By Remark 1, if $\delta^{\prime}$ is sufficiently small, then any diffeomorphism $F\in N(\delta^{\prime},F_0)$ is assigned to some mild skew product $G$, and the diffeomorphisms $f_{\omega}$ of the mild skew product $G$ (cf., Definition 2) are contained in the neighborhood $N(\delta,g_{\omega_0})$, where the symbol $\omega_0$ was defined in the conditions of Theorem~1. During the proof of Theorem A, we fix a sufficiently small number~$\delta$, i.e., we fix the number $\delta^{\prime}$ too, implicitly.  By Theorem 2, any diffeomorphism $F\in W$ has the local maximal invariant set $\Delta$. Hence, in order to prove Theorem A, it is enough to establish just that the restriction $F|_{\Delta}$  does not have OSP. Since OSP is preserved under conjugacy, Theorem A is reduced to the following theorem:

\begin{proclaim}{Theorem $\textbf{A}^{\prime}$}
Let $G$ be a mild skew product correspondent (in the sense of Remark 1) to some diffeomorphism $F$ of the manifold $M=S^2\times S^1$. In addition, suppose that the skew product $G$ satisfies the conditions of Theorem 1, and the neighborhoods $W_0(g_0)$ and $W_1(g_1)$ are sufficiently small. Then $G$ does not have OSP.
\end{proclaim}  

\section{Main lemma.}

The following lemma is a main ingredient of our constructions. It gives a sufficient condition for a pseudotrajectory to satisfy the following: any exact trajectory that orbitally shadows the pseudotrajectory is contained in the stable (or the unstable) manifold of a hyperbolic fixed point. 

\begin{lemma}
Let $M$ be a closed smooth manifold with metric dist, $f$ be a diffeomorphism of the manifold $M$, $p$ be a hyperbolic periodic point, and $q^1\in W^s(p)$. Fix numbers $R>0$ and $0<\epsilon_0<R/2$. There exists a number $0<\epsilon<\epsilon_0$ such that if a sequence $\xi=\{x_k\}$ satisfies the following relations:
\begin{equation}
\label{3.1}
x_k=q^1_k=f^k(q^1)\qquad\textrm{for}\ k\geq1,\quad\ x_k\notin N(\epsilon_0,O(p,f))\qquad\textrm{for}\ k<1,
\tag{3.1}
\end{equation}
then for any point $q^2$ which satisfies the inclusions 
\begin{equation}
\label{3.2}
\xi\subset N(\epsilon,O(q^2,f))\quad\textrm{and}\quad O(q^2,f)\subset N(\epsilon,\xi) 
\tag{3.2}
\end{equation}
and the inequality $\textrm{dist}(q_1^1,q_1^2)<\epsilon$ the following holds:
\begin{equation}
\label{3.3}
q^2\in W^s(p),
\tag{3.3}
\end{equation}
\begin{equation}
\label{3.4}
\textrm{dist}(q^1_k,q^2_k)\leq R\quad\textrm{for}\ k\geq1.
\tag{3.4}
\end{equation} 
\end{lemma}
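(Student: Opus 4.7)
The plan is to trap $q^2$ in a small hyperbolic neighborhood of the periodic orbit $O(p,f)$, invoking the orbital shadowing condition to rule out any escape. Let $K$ be a Lipschitz constant for $f$ and write $d_k=\textrm{dist}(q^1_k,O(p,f))$. Since $q^1\in W^s(p)$ and $p$ is hyperbolic, the $d_k$ decay exponentially once $q^1_k$ enters the local stable manifold, so $\sum_k d_k<\infty$. I first fix a multiplicative gap $\nu_2>0$ for the sequence $\{d_k\}$, i.e.\ $\{d_k\}\cap(\nu_2,K\nu_2]=\emptyset$; the finite total length of the bad set $\bigcup_k[d_k/K,d_k)$ guarantees such $\nu_2$ exists (one may work in an adapted metric near $O(p,f)$ so that the effective $K$ is close to the unstable spectral radius). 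The Lipschitz inequality $d_{k+1}\leq K d_k$ combined with this gap gives $d_{k+1}\leq\nu_2$ whenever $d_k\leq\nu_2$, so there is a smallest $N_1$ with $d_k\leq\nu_2$ for every $k\geq N_1$, and each $q^1_l$ with $1\leq l<N_1$ satisfies $d_l>K\nu_2$. Set $d_*=\min_{1\leq l<N_1}d_l$ and pick $\nu_1\in(\nu_2,d_*/K)$ with $\nu_1<\min(\epsilon_0/(2K),R/2)$; let $V_1=N(\nu_1,O(p,f))$, small enough for the local stable manifold theorem to apply, and let $V_2=N(\nu_2,O(p,f))$. Finally pick $\epsilon<\min(\nu_1-\nu_2,\,\epsilon_0/2,\,d_*-K\nu_1)$ with also $K^{N_1-1}\epsilon<\nu_1-\nu_2$.

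The trapping claim is that $q^2_k\in V_1$ for every $k\geq N_1$. By iterated Lipschitz continuity, $\textrm{dist}(q^1_k,q^2_k)\leq K^{k-1}\epsilon<\nu_1-\nu_2$ for $1\leq k\leq N_1$, so $q^2_{N_1}\in V_1$. Were the claim false, let $T\geq N_1+1$ be the first exit; then $q^2_{T-1}\in V_1$ gives $\textrm{dist}(q^2_T,O(p,f))\leq K\nu_1$, while $q^2_T\notin V_1$ gives $\textrm{dist}(q^2_T,O(p,f))>\nu_1$. By~(\ref{3.2}) some $x_l$ satisfies $\textrm{dist}(q^2_T,x_l)<\epsilon$. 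If $l<1$ then $\textrm{dist}(x_l,O(p,f))\geq\epsilon_0>K\nu_1+\epsilon$, a contradiction. If $l\geq N_1$ then $x_l=q^1_l\in V_2$, so $\textrm{dist}(q^2_T,O(p,f))<\nu_2+\epsilon<\nu_1$, contradicting $q^2_T\notin V_1$. If $1\leq l<N_1$ then $d_l\geq d_*>K\nu_1+\epsilon$, so $\textrm{dist}(q^2_T,O(p,f))>d_l-\epsilon>K\nu_1$, contradicting the Lipschitz bound. The claim thus holds, and the local stable manifold theorem yields $q^2_{N_1}\in W^s_{\textrm{loc}}(p)\subset W^s(p)$, establishing~(\ref{3.3}). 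For~(\ref{3.4}), the continuity estimate gives $\textrm{dist}(q^1_k,q^2_k)\leq K^{N_1-1}\epsilon<\nu_1<R$ for $1\leq k\leq N_1$, while for $k>N_1$ both iterates lie in $V_1$, so $\textrm{dist}(q^1_k,q^2_k)\leq 2\nu_1<R$.

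The main obstacle is case (c) of the trapping argument: an intermediate iterate $q^1_l$ with $1\leq l<N_1$ could a priori hide in the annulus $V_1\setminus V_2$ just where $q^2_T$ is forced to be, in which case no contradiction would follow. The remedy is the preliminary choice of $\nu_2$ as a multiplicative gap of $\{d_k\}$: via the Lipschitz step $d_{k+1}\leq K d_k$, every earlier iterate $q^1_l$ is pushed to distance $>K\nu_2$, and the subsequent choice $\nu_1<d_*/K$ places the entire Lipschitz image of $V_1$ strictly below those iterates. The existence of such a $\nu_2$ reduces to summability of $\{d_k\}$, which may demand an adapted metric when the global $K$ is large relative to the stable contraction. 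The chain of dependencies $\nu_2\to N_1\to\nu_1\to\epsilon$ is tight, and the bottleneck for how small $\epsilon$ must ultimately be is the exponential factor $K^{N_1-1}$.
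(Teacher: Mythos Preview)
Your argument rests on finding a multiplicative gap $\nu_2$ with $\{d_k\}\cap(\nu_2,K\nu_2]=\emptyset$, and you need this $\nu_2$ to be \emph{small} (below $\epsilon_0/(2K)$, so that $\nu_1$ can then be chosen). The justification you offer --- finite total length of $\bigcup_k[d_k/K,d_k)$ --- only shows the complement is non-empty somewhere in $(0,\infty)$; it does not show the complement meets every neighborhood of~$0$. In fact the bad set can cover an entire interval $(0,\delta)$: for a hyperbolic fixed point with stable rate $\lambda_s$ and global Lipschitz constant $K$ satisfying $K\lambda_s\geq1$ (e.g.\ a planar linear saddle with eigenvalues $1/2$ and $4$), one has $d_k\sim C\lambda_s^k$ and the intervals $[d_{k+1}/K,d_{k+1})$, $[d_k/K,d_k)$ overlap. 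Summability of $\{d_k\}$ always holds but is irrelevant here. Passing to an adapted metric does not save this: near the orbit the Lipschitz constant of $f$ is still at least the unstable rate $\lambda_u$, so you would need $\lambda_u\lambda_s<1$, a genuine extra hypothesis the lemma does not assume; and the same constant $K$ reappears in the global bounds $K^{N_1-1}\epsilon$ and $\textrm{dist}(q^2_T,O(p,f))\leq K\nu_1$, so a metric adapted only locally cannot be used consistently throughout.

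Without the gap, your case (c) collapses: an intermediate $d_l$ may well lie in $(\nu_2,K\nu_1]$, and then $q^2_T$ being $\epsilon$-close to $x_l$ is compatible with both $q^2_T\notin V_1$ and $\textrm{dist}(q^2_T,O(p,f))\leq K\nu_1$, so no contradiction arises. The paper sidesteps this entirely by handling the intermediate case with a \emph{backtracking} argument rather than a one-step distance estimate: it takes $\epsilon$ so small that $f^{j-k}(N(\epsilon,x_k))\subset N(\epsilon_3,x_j)$ for all $1\leq j,k\leq m$, so if $q^2_r$ lands $\epsilon$-near an intermediate $x_k$ then the preceding iterates $q^2_{r-1},q^2_{r-2},\ldots$ are pinned near $x_{k-1},x_{k-2},\ldots$; tracing back to the last index $k'<k$ with $x_{k'}\notin N(\epsilon_1,O(p,f))$ produces an earlier escape $q^2_{r-(k-k')}\notin N(\epsilon_1,O(p,f))$ with $r-(k-k')>m$, contradicting the minimality of $r$. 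This needs only that $\epsilon_1$ avoid the countable set $\{d_k\}$, not a multiplicative gap. Replacing your case~(c) with this backtracking step would repair the proof while keeping the rest of your structure intact.
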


In fact, the lemma states the following: Let $p$ be a hyperbolic periodic point. Then, if a pseudotrajectory $\xi=\{x_k\}$ of a certain type is ''close'' to the trajectory $O(p,f)$ for all sufficiently large positive $k$ and is ''far'' from the trajectory $O(p,f)$ for all negative $k$ with sufficiently large absolute values, then any exact trajectory that orbitally shadows the pseudotrajectory $\xi$ is contained in the stable manifold of the point $p$.

\begin{proclaim}{Corollary}
Under the conditions of Lemma 1, suppose that $q^1\in W^u(p)$. There exists a number $0<\epsilon<\epsilon_0$ such that if a sequence $\xi=\{x_k\}$ satisfies the relations 
\begin{equation}
\label{3.5}
x_k=q^1_k=f^k(q^1)\qquad\textrm{for}\ k\leq1,\quad\ x_k\notin N(\epsilon_0,O(p,f))\qquad\textrm{for}\ k>1,
\tag{3.5}
\end{equation}
then for any point $q^2$ which satisfies the inequality $\textrm{dist}(q_1^1,q_1^2)<\epsilon$ and inclusions~$(\ref{3.2})$ the following holds:
\begin{equation}
\label{3.6}
q^2\in W^u(p),
\tag{3.6}
\end{equation}
\begin{equation}
\label{3.7}
\textrm{dist}(q^1_k,q^2_k)\leq R\quad\textrm{for}\ k\leq1.
\tag{3.7}
\end{equation}  
\end{proclaim}

\begin{proof}[Proof of Lemma 1] At first, we need to choose a sufficiently small $\epsilon$. We do it in several steps. 

Choose a number $\epsilon_1<\epsilon_0$ such that
\begin{itemize}
\item if $O(p,f)=\{p_0,\ldots,p_{m_p-1}\}$, then the neighborhoods $$N(\epsilon_1,p_0),\ N(\epsilon_1,p_1),\ldots, N(\epsilon_1, p_{m_p-1})$$ 
are disjoint; 
\item if a positive semitrajectory $O_{+}(x,f)$ is contained in the set $N(\epsilon_1,O(p,f))$, then $x\in W^s(p)$;
\item $\left(N(\epsilon_1,O(p,f))\cup f(N(\epsilon_1,O(p,f)))\right)\cap N(\epsilon_1,x_k)=\emptyset$ for $k\leq1$;
\item there are no points of the sequence $\xi$ on the boundary of the set $N(\epsilon_1,O(p,f))$.
\end{itemize}
Let $n>1$ be the minimal number such that the points $x_k$ are contained in the neighborhood $N(\epsilon_1,O(p,f))$ for $k\geq n$. Choose a number $\epsilon_2<\epsilon_1$ such that
\begin{itemize}
\item the following neighborhoods are disjoint:
$$N(\epsilon_2,O(p,f))\quad\textrm{and}\quad N(\epsilon_2,x_k)\ \textrm{for}\ 1\leq k\leq n;$$
\item if there exist numbers $1\leq k_1\leq n$ and $1\leq k_2$ such that
$$x_{k_1}=q^1_{k_1}\notin N(\epsilon_1,O(p,f))\quad\textrm{and}\quad x_{k_2}=q^1_{k_2}\in N(\epsilon_1,O(p,f)),$$
then $N(\epsilon_1,O(p,f))\cap N(\epsilon_2,x_{k_1})=\emptyset$ and $N(\epsilon_2,x_{k_2})\subset N(\epsilon_1,O(p,f))$. 
\end{itemize}
Let $m\geq n$ be the minimal number such that the point $x_m$ is contained in the set $N(\epsilon_2/3,O(p,f))$. Choose a number $\epsilon_3<\epsilon_2/3$ such that the neighborhoods $N(\epsilon_3,x_k)$ are disjoint for $1\leq k\leq m$. Choose a number $\epsilon<\epsilon_3$ such that
$$f^{j-k}(N(\epsilon,x_k))\subset N(\epsilon_3,x_j)\qquad\textrm{for}\ j,k\in\{1,\ldots,m\}.$$
Let us show that the number $\epsilon$ chosen above has the desired properties. Suppose that relation $(\ref{3.2})$ holds for a point $q^2$ and for the sequence $\xi$.

Since $q^2_1\in N(\epsilon,x_1)$, the choice of $\epsilon$ implies that 
\begin{equation}
\label{3.8}
q^2_k\in N(\epsilon_3,x_k)\qquad\textrm{for}\ 1\leq k\leq m.
\tag{3.8}
\end{equation}
Thus, by inclusion $(\ref{3.8})$ and our notations, the point $q_2^m$ is contained in the set $N(\epsilon_2,O(p,f))$. In order to obtain inclusion $(\ref{3.3})$, it is sufficient to prove the following inclusion
\begin{equation}
\label{3.9}
q^2_k\in N(\epsilon_1,O(p,f))\qquad\textrm{for}\ k\geq m.
\tag{3.9}
\end{equation}

Suppose that inclusion $(\ref{3.9})$ does not hold, i.e., there exists the minimal number $r>m$ such that 
\begin{equation}
\label{3.10}
q^2_r\notin N(\epsilon_1,O(p,f)).
\tag{3.10}
\end{equation}
From relation $(\ref{3.2})$ and the choice of $\epsilon_2$ and $\epsilon_3$ it follows that there exists a number $k<n$ such that the point $q^2_r$ is contained in the set $N(\epsilon,x_k)$. Two cases are possible:

Case 1: $k\leq 1$. By the choice of $r$, $q^2_{r-1}\in N(\epsilon_1,O(p,f))$. But then, by relation~$(\ref{3.10})$,
$$f(N(\epsilon_1,O(p,f)))\cap N(\epsilon_1,x_k)\neq\emptyset\qquad\textrm{for}\ k\leq1,$$
what contradicts to the choice of $\epsilon$.

Case 2: $1<k< n$. There exists the maximal number $1\leq k^\prime<k$ such that the point $x_{k^{\prime}}$ does not belong to the set $N(\epsilon_1,O(p,f))$. But then, by the choice of $\epsilon$, 
$$q^2_{r-t}\in N(\epsilon_3,x_{k-t})\quad\textrm{for }0\leq t\leq k-k^{\prime},$$
and, hence, by the choice of $\epsilon_2$,
$$q^2_{r-(k-k^\prime)}\notin N(\epsilon_1, O(p,f))\quad\textrm{and}\quad q^2_{r-t}\notin N(\epsilon_2,O(p,f))\ \ \ \textrm{for}\ 0\leq t\leq k-k^\prime.$$
Thus, $r-(k-k^\prime)>m$ (since, otherwise, one could find a number $1\nobreak\leq\nobreak t\nobreak\leq\nobreak k\nobreak -\nobreak k^{\prime}$ such that the point $q^2_m=q^2_{r-t}$ belongs to the set $N(\epsilon_2,O(p,f))$). Hence, the number $r$ is not minimal, and we get the desired contradiction.

We proved inclusion $(\ref{3.9})$ and, hence, inclusion $(\ref{3.3})$ too. By relations $(\ref{3.8})$ and $(\ref{3.9})$, and since $q^1_k\in N(\epsilon_1,O(p,f))$ for $k\geq m$, inequality $(\ref{3.4})$ holds for the points $q^1$ and $q^2$. Lemma 1 is proved.
\end{proof}          

\section{Reduction of the proof of Theorem $\textrm{A}^{\prime}$ to two cases, the proof in Case~(A1)}

Two following cases are possible:
\begin{enumerate}
\item[(A1)] There exist two hyperbolic periodic, lying on different fibres, points $r_1$ and $r_2$ with one-dimensional unstable and stable manifolds, respectively, such that this manifolds intersect.
\item[(A2)] For any hyperbolic periodic, lying on different fibres, points $r_1$ and $r_2$ with one-dimensional unstable and stable manifolds, respectively, this manifolds do not intersect.
\end{enumerate}

Now we can give a more detailed outline of the proof scheme in the two cases under consideration.

In Case (A1), we construct the pseudotrajectory in the following way: It  
includes a part of an exact heteroclinic trajectory from $r_1$ to $r_2$, then it ''leaps'' (sufficiently close to the point $r_2$) to a trajectory that lies on the fibre of the point $r_2$ and ''goes away'' from the point $r_2$ by its unstable manifold. Let us emphasize that Main Lemma can be applied not only to the pseudotrajectory, but also to its projection to the base (the Smale horseshoe). To get a contradiction, suppose that the constructed pseudotrajectory is orbitally shadowed by an exact trajectory. Next, we apply Main Lemma to the projection of the trajectory to the base and see that this trajectory lies on the fibre of some trajectory ''going to'' the point $r_2$. On the other hand, since the pseudotrajectory ''goes away'' from the point $r_1$, the exact trajectory should be contained in the unstable manifold of the point $r_1$. In addition, since the Bernoulli shift is expansive, the projection to the base of the heteroclinic trajectory used in our construction should coincide with the projection of the exact shadowing trajectory. Finally, since the projection of a local unstable manifold is one-to-one in a small neighborhood of the point $r_1$, and we know the point in the base, the exact trajectory should precisely coincide with the heteroclinic trajectory used in our construction. We see the contradiction: the ''final'' part of the pseudotrajectory (near the point $r_2$) is not shadowed. The phase portrait of a mild skew product in Case (A1) is depicted in Fig. 1.

\bigskip \bigskip
\bigskip 
\bigskip
\bigskip \bigskip
{\small{
\begin{picture}(100,120)
\put(330,100){\circle*{3}}
\put(330,100){\vector(-3,-1){160}}
\put(335,94){$r_1$}
\put(50,100){\circle*{3}}
\put(50,100){\line(0,1){70}}
\put(50,100){\line(0,-1){70}}
\put(213,46){\vector(-3,1){90}}
\put(213,46){\line(-3,1){160}}
\put(191,54){\circle*{3}}
\put(198,42){$x$}
\put(37,92){$r_2$}
\put(0,164){$prr_2\times S^1$}
\put(50,100){\vector(0,1){30}}
\put(50,100){\vector(0,-1){30}}
\put(50,115){\circle*{3}}
\put(39,111){$y$}
\put(260,60){$W^u(r_1)$}
\put(120,60){$W^s(r_2)$}
\put(330,10){Fig. 1}
\end{picture}
}
}
\\

In Case (A2), using the technique of mild skew products, we construct a pseudotrajectory that ''goes from'' and ''goes to'' hyperbolic periodic points with one-dimensional unstable and stable manifolds, respectively, in the initial and final phases, respectively, and does not approach this points in the intermediate phase. Then, by Main Lemma, if an exact trajectory orbitally shadows the pseudotrajectory, then it should be a heteroclinic trajectory connecting this points, and we get the desired contradiction.   

Case (A1) is proved in this section, and Case (A2) is considered in the remaining sections.

Suppose that a mild skew product $G$ satisfies the conditions of Case (A1). Choose points $x\in W^u(r_1)\cap W^s(r_2)$ and $y\in (pr r_2\times S^1)\cap W^u(r_2)$ (cf.,~Fig.~1). At first, we need to choose a sufficiently small number $\varepsilon$. We do it in several steps.

Choose a number $\varepsilon_0>0$ such that
\begin{enumerate}
\item[1)] the statement of Main Lemma corollary holds for the mapping $G$, some point of the trajectory $O(x,G)$ and the point $r_1$;
\item[2)] the statement of Main Lemma holds for the Bernoulli shift $\sigma$, some point of the trajectory $O(pr x,\sigma)$ and the point $pr r_2$ (in this two items the number $R$ is assumed to be a sufficiently small, but preliminary fixed number);
\item[3)] the point $y$ does not belong to the set $N(2\varepsilon_0,O(x,G))$;
\item[4)] the restriction of the projection $pr$ onto a local unstable manifold $W_{\varepsilon_0}^u(r_1)$ is a one-to-one mapping.
\end{enumerate}   

Let us explain items 1) and 2). Strictly speaking, both Main Lemma and its corollary can not be applied directly in our case, since the spaces $\Sigma^2$ and $\Sigma^2\nobreak\times\nobreak S^1$ are not manifolds, and the above-mentioned statements were proved only for the manifolds. However, in item 1), we can consider the diffeomorphism $F$ of the manifold $M$ that corresponds to the mild skew product $G$ (in the sense of Remark 1), and points $\bar{x}$ and $\bar{r}_1$ which are the analogs of the points $x$ and $r_1$ for the diffeomorphism $F$. There exist a number $\epsilon_0$ and a point $x_0\in O(x,G)$ that satisfy the analog of condition $(\ref{3.5})$. Choose a point $q^2\in\Sigma^2\times S^1$ that satisfies the analog of relation~$(\ref{3.2})$ for some small $\epsilon$ and the inequality $\textrm{dist}(G(q^2),G(x_0))<\epsilon$. By Theorem 2, there exists a homeomorphism $h_F$ conjugating the restriction $F|_{\Delta}$ (where the set $\Delta$ was defined in the conditions of Theorem 2) and the mild skew product $G$. Since the homeomorphisms $h_F$ and $h_F^{-1}$ are homeomorphisms of compact metric spaces, they are uniformly continuous. Hence, the analog of condition~$(\ref{3.5})$ holds for some number $\bar{\epsilon}_0$ and point $\bar{x}_0=h_F^{-1}(x_0)$. Moreover, all conditions of Corollary are satisfied for the point $\bar{q}^2=h_F^{-1}(q^2)$, an analog of the point $q^2$, the point $\bar{x}_0$, sufficiently small number $\bar{\epsilon}$, and the mapping $F$. Thus, Corollary can be applied to the diffeomorphism $F$, i.e., the analogs of relations $(\ref{3.6})$ and $(\ref{3.7})$ hold for sufficiently small numbers $\bar{R}$ and $\bar{\epsilon}$, and the points $\bar{x}_0$ and $\bar{q}^2$. Hence, by the uniform continuity of the homeomorphism $h_F$, the analogs of relations $(\ref{3.6})$ and $(\ref{3.7})$ hold for sufficiently small numbers $R$ and $\epsilon$, and the points $x_0$~and~$q^2$. 

Hence, the statement of Corollary holds for the mapping $G$. In item 2), we can consider the diffeomorphism $g:S^2\mapsto S^2$ (which was fixed above) having the Smale horseshoe and apply similar reasoning. Thus, the statement of Lemma 1 can be applied to the Bernoulli shift $\sigma$ too. 

Let us explain item 4). Consider the diffeomorphism $F:M\nobreak\mapsto\nobreak M$ corresponding to the mild skew product $G$. By Theorem 2, the diffeomorphism $F$ has the local maximal invariant set $\Delta$ homeomorphic to $\Sigma^2\times S^1$ such that $F|_{\Delta}$~and~$G$ are topologically conjugate. The sets homeomorphic to $S^1$ and corresponding to the fibres of the mild skew product $G$ are called the center fibres. By Remark 1, any periodic point $r_1$ of the mild skew product $G$ is assigned to some hyperbolic periodic point $\bar{r}_1$ of the diffeomorphism $F$. Since $r_1$ is a point of type $(2,1)$, a local unstable manifold of the point $\bar{r}_1$ with respect to the diffeomorphism $F$, i.e, a set $W^u_{\textrm{loc}, F}(\bar{r}_1)$, is just a finite union of ''intervals''. The angle between the unstable space of the diffeomorphism $F$ at the point~$\bar{r}_1$ and the corresponding central fibre is not equal to zero. It is shown in the papers \cite{3,5,6} that the central fibres of the diffeomorphism $F$ are $C^1$-close to the corresponding fibres of the diffeomorphism $g\times\textrm{id}_{S^1}$, i.e., to the circles. Hence, the intersection of the set $W^u_{\textrm{loc}, F}(\bar{r}_1)$ and any central fibre consists of no more than one point. Consequently, the intersection of the set $W^u_{\textrm{loc}, G}(r_1)$ and any fibre also consists of no more than one point. Thus, the projection to the base is a one-to-one mapping on the set $W^u_{\textrm{loc}, G}(r_1)$.

We need the following two lemmas: 

\begin{lemma}
There exists a number $R$ such that if $\sigma:\Sigma^2\mapsto\Sigma^2$ is the Bernoulli shift, and points $q^1,q^2\in\Sigma^2$ satisfy the inequality
$$d_{\Sigma^2}(q^1_k,q^2_k)\leq R\quad\textrm{for }k\in\textbf{Z},$$
then $q^1=q^2$ (recall that $q^j_k=\sigma^k(q^j)$).  
\end{lemma}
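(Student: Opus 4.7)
The plan is to take $R = 1/2$ and argue that this serves as an expansivity constant for the Bernoulli shift $\sigma$. With such an $R$, the lemma is immediate from the definition of $d_{\Sigma^2}$ together with the way $\sigma$ permutes the symbol positions.

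First I would unpack what the bound $d_{\Sigma^2}(\omega, \omega') \leq 1/2$ says about two sequences $\omega$ and $\omega'$. By the definition of the metric recalled at the beginning of Sec. 2, $d_{\Sigma^2}(\omega, \omega') = 1/2^k$ where $k \geq 0$ is minimal such that $\omega$ and $\omega'$ disagree at position $-k-1$ or at position $k$. The inequality $1/2^k \leq 1/2$ forces $k \geq 1$; by minimality this in turn forces $\omega$ and $\omega'$ to coincide at positions $-1$ and $0$.

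Next, I would apply this observation with $\omega = q^1_k = \sigma^k(q^1)$ and $\omega' = q^2_k = \sigma^k(q^2)$ for each $k \in \textbf{Z}$. Writing $q^j = \ldots \alpha^j_{-1} | \alpha^j_0 \alpha^j_1 \ldots$ for $j=1,2$, the symbol at the zero position of $\sigma^k(q^j)$ is exactly $\alpha^j_k$. The hypothesis of the lemma, applied at every $k$, therefore forces $\alpha^1_k = \alpha^2_k$ for all $k \in \textbf{Z}$, which is precisely $q^1 = q^2$.

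There is no genuine obstacle in this argument; the lemma is the standard fact that $\sigma$ is expansive on $(\Sigma^2, d_{\Sigma^2})$, and the constant $R = 1/2$ is read off directly from the concrete metric fixed in the introduction. (In fact any $R < 1$ would work equally well, since the bound $d_{\Sigma^2}(\omega,\omega') < 1$ already forces $k \geq 1$.)
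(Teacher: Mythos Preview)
Your argument is correct and is precisely the standard expansivity argument for the full shift; the paper itself does not spell out a proof of this lemma but simply remarks that it amounts to expansivity of the Bernoulli shift and refers to Katok--Hasselblatt. Your choice $R=1/2$ (or any $R<1$) and the direct symbol-by-symbol verification is exactly the intended content.
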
      

In fact, Lemma 2 means that the Bernoulli shift is expansive. The proof of this fact is given, e.g., in the book~\cite{2}.

\begin{lemma}
Under the conditions of Case (A1), let $R$ be the number given by Lemma 2. There exists an $\varepsilon<\varepsilon_0$ such that if the relations 
$$d_{\Sigma^2}(q_1^1,q_1^2)<\epsilon,$$ 
$$O(q^1,\sigma)\subset N(\varepsilon,O(q^2,\sigma))\quad\textrm{and}\quad O(q^2,\sigma)\subset N(\varepsilon, O(q^1,\sigma))$$
hold for two points $q^1=pr x$ and $q^2$ from $\Sigma^2$ such that $q^1,q^2\in W^u(pr r_1)\cap\allowbreak\cap W^s(pr r_2)$, then $\textrm{dist}(q^1_k,q^2_k)\leq R$ for $k\in\textbf{Z}$.
\end{lemma}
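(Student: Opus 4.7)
The plan is to apply Lemma~1 in the forward direction and its Corollary in the backward direction so as to bound $\textrm{dist}(q^1_k,q^2_k)$ for all sufficiently large $|k|$, and then to bridge the remaining finite window of $k$ by the Lipschitz continuity of the shift $\sigma$. As in items 1) and 2) preceding the statement, Lemma~1 and its Corollary are tacitly transferred from the manifold setting to the Bernoulli shift via the topological conjugacy between $\sigma$ and $g|_{\Lambda}$ on the Smale horseshoe; a straightforward application of the uniform continuity of the conjugating homeomorphism makes the statements available on $\Sigma^2$.

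For the forward step, since $q^1\in W^s(pr\,r_2)$, there is a smallest integer $k_+$ such that $q^1_k\in N(\varepsilon_0,O(pr\,r_2,\sigma))$ for every $k\geq k_+$. After shifting time so that $k_+$ occupies position $1$, the orbit $O(q^1,\sigma)$ satisfies hypothesis (3.1) of Lemma~1 with $p=pr\,r_2$. With $R$ taken to be the expansivity constant of $\sigma$ from Lemma~2, Lemma~1 produces a threshold $\varepsilon_+>0$ such that, whenever the orbital $\varepsilon_+$-closeness hypothesis holds and $d_{\Sigma^2}(q^1_{k_+},q^2_{k_+})<\varepsilon_+$, one has $\textrm{dist}(q^1_k,q^2_k)\leq R$ for every $k\geq k_+$. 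The symmetric argument, applied to the Corollary with $p=pr\,r_1$ (available because $q^1\in W^u(pr\,r_1)$), produces a largest integer $k_-$ and a threshold $\varepsilon_->0$ such that the orbital $\varepsilon_-$-closeness hypothesis together with $d_{\Sigma^2}(q^1_{k_-},q^2_{k_-})<\varepsilon_-$ forces $\textrm{dist}(q^1_k,q^2_k)\leq R$ for every $k\leq k_-$.

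To bridge the middle range, note that $\sigma$ and $\sigma^{-1}$ satisfy $d_{\Sigma^2}(\sigma^{\pm 1}(\omega),\sigma^{\pm 1}(\omega'))\leq 2\,d_{\Sigma^2}(\omega,\omega')$, so the hypothesis $d_{\Sigma^2}(q^1_1,q^2_1)<\varepsilon$ propagates to $d_{\Sigma^2}(q^1_k,q^2_k)\leq 2^{|k-1|}\varepsilon$ for every $k\in\textbf{Z}$. Since the integers $k_-$ and $k_+$ depend only on the fixed point $q^1$, we may choose $\varepsilon<\varepsilon_0$ small enough that simultaneously $2^{|k-1|}\varepsilon<R$ on the finite interval $k_-\leq k\leq k_+$, that $2^{k_+-1}\varepsilon<\varepsilon_+$, and that $2^{1-k_-}\varepsilon<\varepsilon_-$. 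The first condition handles the middle range, while the latter two verify the one-point hypotheses at positions $k_+$ and $k_-$ required to invoke Lemma~1 and the Corollary, which in turn cover the two tails $k\geq k_+$ and $k\leq k_-$.

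The only conceptual obstacle is that Lemma~1 is stated for diffeomorphisms of smooth manifolds, whereas $\sigma$ acts on the zero-dimensional space $\Sigma^2$. This is handled exactly as in items 1) and 2) preceding the statement: the argument is run on the smooth side of the Smale-horseshoe conjugacy, and the conclusions are transported back through the uniformly continuous conjugating homeomorphism. Once this transfer is granted, the remainder of the proof is a routine bookkeeping of the continuity constants above, all of which are determined by the fixed point $q^1$.
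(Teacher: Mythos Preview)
Your proof is correct and follows essentially the same route as the paper's: apply the Corollary of Lemma~1 (at the index coming from item~1) of the choice of~$\varepsilon_0$) to control the backward tail, apply Lemma~1 (at the index coming from item~2)) to control the forward tail, and use continuity of~$\sigma$ to propagate the single-point hypothesis from $k=1$ to those two indices while bounding the finite middle range by $\varepsilon_0<R$. One small imprecision worth noting: your definition of $k_+$ as the least index after which the orbit \emph{stays} in $N(\varepsilon_0,O(pr\,r_2,\sigma))$ does not by itself yield the second half of hypothesis~(3.1) (the orbit could visit that neighbourhood and leave again before~$k_+$), but since you correctly defer to items~1) and~2) of the setup of~$\varepsilon_0$---where the existence of a suitable index is already part of the hypothesis---this is harmless.
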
 

In other words, Lemma 3 means that, under the conditions of Case (A1), if two heteroclinic trajectories which ''go from'' $pr r_1$ ''to'' $pr r_2$ are orbitally close, then they are pointwise close.

\begin{proof}
By the choice of $\varepsilon_0$, the statement of Lemma 1 corollary holds for the point $pr r_1$, the mapping $\sigma$ and some point $q^1_{k_1-1}=\sigma^{k_1-1}(q^1)\in O(pr x,\sigma)$, and the statement of Lemma 1 holds for the point $pr r_2$, the mapping $\sigma$ and some point $q^2_{k_2-1}\in O(pr x,\sigma)$ (with $R$ given by Lemma 2). Hence, if $d_{\Sigma^2}(q^1_{k_1},q^2_{k_1})<\allowbreak <\varepsilon_0$, then, by inequalities $(\ref{3.4})$,
$$d_{\Sigma^2}(q^1_{k},q^2_{k})\leq R\qquad\textrm{for}\ k\leq k_1.$$
Similarly, if $d_{\Sigma^2}(q^1_{k_2}, q^2_{k_2})<\varepsilon_0$, then 
$$d_{\Sigma^2}(q^1_{k},q^2_{k})\leq R\qquad\textrm{for}\ k\geq k_2.$$

Choose a number $\varepsilon<\varepsilon_0$ such that the inequalities $d_{\Sigma^2}(q_1^1,q_1^2)<\varepsilon$ imply the inequalities $$d_{\Sigma^2}(q^1_k,q^2_k)<\varepsilon_0\quad\textrm{for}\ k\ \textrm{between 1 and}\ k_1,$$
$$d_{\Sigma^2}(q^1_k,q^2_k)<\varepsilon_0\quad\textrm{for}\ k\ \textrm{between 1 and}\ k_2.$$
The number $\varepsilon$ has the desired properties, i.e., the statement of Lemma 3 holds for this number. 
\end{proof}

Let $\varepsilon$ be the number whose existence was proved in Lemma 3. Choose an arbitrary number $d<\varepsilon$. Now we construct the pseudotrajectory discussed at the beginning of Sec. 4. Choose numbers $k_1$ and $k_2$ such that $$x_{k_1+1}\in N(d/2,O(r_2,G))\quad\textrm{and}\quad y_{k_2}\in N(d/2,O(r_2,G)).$$
We construct the $d$-pseudotrajectory $\xi=\{\xi_k\}$ in the following way (cf. Fig.~1):
$$\xi_k=x_k\textrm{ for }k\leq k_1,\quad\xi_k=y_{k-k_1+k_2-1}\textrm{ for }k>k_1.$$ 
Suppose that the mild skew product $G$ has OSP. Then there exists the point $q$ such that relation~$(\ref{1.1})$ holds for it and the pseudotrajectory $\xi$.

By the choice of $\varepsilon$, the corollary of Lemma 1 holds for the point $r_1$ and the pseudotrajectory $\xi$. Hence, 
\begin{equation}
\label{4.1}
q\in W^u(r_1).
\tag{4.1}
\end{equation}

Consider the sequence $pr \xi$.  The points of the sequence $pr \xi$ coincide with the corresponding points of the trajectory $O(pr r_2,\sigma)$ before the intersection with the $d/2$-small neighborhood of the point $pr r_2$. Therefore, by relation $(\ref{1.1})$,
\begin{equation}
\label{4.2}
O(pr x,\sigma)\subset N(\varepsilon, O(pr q,\sigma))\quad\textrm{and}\quad O(pr q,\sigma)\subset N(\varepsilon, O(pr x,\sigma))
\tag{4.2}
\end{equation}
for a sufficiently small $d$.   
Thus, relation~$(\ref{4.2})$ and the analog of relation $(\ref{3.1})$ hold for the trajectory of the point $pr x$. Hence, by the choice of $\varepsilon$ and by relation~$(\ref{4.1})$,
$$pr q\in W^u(pr r_1)\cap W^s(pr r_2).$$
 
All conditions of Lemma 3 hold for the points $pr x$ and $pr q$ (of course, we can assume that $\textrm{dist}(x_1,q_1)<\varepsilon$). Hence, $$d_{\Sigma^2}(\sigma^k(pr x),\sigma^k(pr q))\leq R\qquad\textrm{for}\ k\in\textbf{Z}.$$
By Lemma 2, $$pr q_k=pr x_k\quad\textrm{for }k\in\textbf{Z}.$$
Since $x, q\in W^u(r_1)$, one can find a number $K$ such that the points $x_K$ and $q_K$ belong to a local unstable manifold of the point $r_1$ that can be projected to the base injectively (cf., the choice of $\varepsilon$, item 4)). Hence, the equality $pr q_K=pr x_K$ implies the equality $q_K=x_K$, and the latter one implies the equality $q=x$. Consequently, by relation $(\ref{1.1})$, the following inclusions hold:
$$O(x,G)\subset N(\varepsilon,\xi)\quad\textrm{and}\quad \xi\subset N(\varepsilon,O(x,G)),$$
what contradicts to the construction of the sequence $\xi$. The derived contradiction means that our assumptions are wrong, and $G\notin\textrm{OSP}$ in Case (A1). 

\section{Start of the proof in Case (A2): auxiliary lemmas}

Two auxiliary lemmas on properties of skew products necessary for the proof in Case (A2) will be formulated and proved in this section. Let us introduce corresponding notations. 

Consider the step skew product $G_0$ generated by the diffeomorphisms $g_0$ and $g_1$. By Theorem 1, the step skew product $G_0$ has an infinite number of hyperbolic periodic points of type (1,2) lying on different fibres and an infinite number of hyperbolic periodic points of type (2,1) lying on different fibres. Note that any infinite set of lying on different fibres periodic points in the space $\Sigma^2\times S^1$ contains points of arbitrary large periods. Choose four hyperbolic periodic, lying on different fibres, points of the step skew product $G_0$: points $p_1$ and $p_3$ of type (2,1), and points $p_2$ and $p_4$ of type (1,2).

Recall that any finite sequence of zeros or ones is called a word. The sequence $pr p_j$, where $j\in\{1,\ldots,4\}$, is periodic, i.e, some word $\omega_j$ of length $T_j$ is periodically repeated in it. We can assume that the word $\omega_j$ is the word of minimal length, i.e., the number $T_j$ is the main period of the point $pr p_j$ with respect to the Bernoulli shift $\sigma$. Without loss of generality, we assume that $T_j>2\ \textrm{for}\ j\in\{1,\ldots,4\}$, i.e., the word $\omega_j$ contains both $0$ and $1$. In addition, without loss of generality, we assume that
\begin{equation}
\label{5.1}
T_3\leq\min(T_1,T_2)\qquad\textrm{and}\qquad T_4\leq\min(T_1,T_2).
\tag{5.1}
\end{equation}
By definition, put
\begin{equation}
\label{5.2}
T=\max(T_1,T_2).
\tag{5.2}
\end{equation}

We can assume that the neighborhoods $W_0$ and $W_1$ from Theorem $\textrm{A}^{\prime}$ are so small that the points $p_j$ are preserved for any mild skew product (which is, in fact, a perturbation of the step skew product $G_0$) from Theorem $\textrm{A}^{\prime}$. It means that the analogs of this points have the same periods and types. In particular, the number $T$ does not depend on the choice of a mild skew product $G$. Let~$\delta$ be the maximal radius of the neighborhoods $W_0$ and $W_1$. We can assume that it is an arbitrary small (and dependent on $T$) but fixed number. Main restrictions on the size of $\delta$ will be imposed further, in Sec. 7.

We denote by the same symbols $p_j$ the hyperbolic periodic points of the mild skew product $G$ corresponding to the points $p_j$ of the step skew product $G_0$. As it was noted above, the periods $T_j$ and the types of the points $p_j$ have not changed. Suppose that, as before, $\omega_j$ ($j\in\{1,\ldots,4\}$) are periodically repeating words of the sequences $pr p_j$.

Let us define the cylinder neighborhoods $U_j$ of the points $p_j$ ($j\in\{1,2\}$) by the formula
$$U_j:=\left\{\omega=\ldots\omega_{j}\omega_{j}|\omega_{j}\omega_{j}\ldots\right\}\times S^1.$$
In the previous formula, the dots denote arbitrary symbols, and the meaning of the sign $|$ was explained above (cf., Sec. 2). The word $\omega_j$ is repeated four times: two times before the zero position and two times after it. Define the cylinder neighborhoods of the trajectories $O(p_j,G)$ ($j\in\{1,2\}$) by the formula 
$$V_j:=U^0_j\cup U^1_j\cup\ldots\cup U^{T_j-1}_j,$$
where the set $U_j^k$ ($0\leq k\leq T_j-1$) is defined similarly with the set $U_j$, only the word $\omega_j$ is changed by the word $\sigma^{k}(\omega_j)$, i.e., the corresponding cyclic permutation of the word $\omega_j$.       

\begin{lemma}
Under our conditions,
\begin{equation}
\label{5.3}
O(p_j,G)\cap V_t=\emptyset\qquad\textrm{for}\ j\in\{3,4\},\ t\in\{1,2\},
\tag{5.3}
\end{equation}
i.e., the trajectories of the points $p_3$ and $p_4$ do not intersect the cylinder neighborhoods $V_1$ and $V_2$ of the trajectories $O(p_1,G)$ and $O(p_2,G)$.
\end{lemma}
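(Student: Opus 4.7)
The plan is to reduce the claim to a combinatorial fact about base sequences in $\Sigma^2$. Each $V_t$ has the product form (cylinder in the base) $\times\, S^1$, so $O(p_j, G) \cap V_t = \emptyset$ is equivalent to $O(pr\, p_j, \sigma) \cap pr\, V_t = \emptyset$. Unfolding the definitions, a sequence $\omega$ lies in $pr\, U_t^k$ iff it agrees on positions $[-2T_t, 2T_t - 1]$ with the bi-infinite periodic sequence $\eta_t^k \in O(pr\, p_t, \sigma)$ whose primitive period word is $\sigma^k(\omega_t)$.

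Assume for contradiction that $\alpha := \sigma^m(pr\, p_j)$ lies in $pr\, U_t^k$ for some $m, k$, and set $\beta := \eta_t^k$. Then $\alpha$ and $\beta$ are bi-infinite sequences of \emph{minimal} periods $T_j$ and $T_t$ respectively (minimality because $\omega_j$ and $\omega_t$ were chosen as shortest period words), and they coincide on a window $W$ of length $4T_t$. I would then apply the Fine--Wilf theorem to the common restriction $\alpha|_W = \beta|_W$: it has both periods $T_j$ and $T_t$, and the assumption $T_j \leq T_t$ from (5.1) gives $|W| = 4T_t \geq T_j + T_t \geq T_j + T_t - \gcd(T_j, T_t)$. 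Hence $W$ has the refined period $g := \gcd(T_j, T_t)$. Since $|W| \geq T_j$, one finds a length-$T_j$ substring of $\alpha$ inside $W$ which has sub-period $g$; as $g \mid T_j$, this substring is a repetition of a length-$g$ block, so $\alpha$ itself has global period $g$. Minimality of $T_j$ forces $g = T_j$, i.e.\ $T_j \mid T_t$. A symmetric argument for $\beta$ gives $T_t \mid T_j$, so $T_j = T_t$.

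With $T_j = T_t$, $\alpha$ and $\beta$ share a common period and agree on a window of length $\geq T_j$, so they coincide everywhere. But then $\sigma^m(pr\, p_j) = \sigma^k(pr\, p_t)$, placing $pr\, p_j$ and $pr\, p_t$ in the same $\sigma$-orbit, which contradicts the hypothesis that $p_j$ and $p_t$ lie on different fibres.

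The main technical point is ensuring the window is long enough to invoke Fine--Wilf: this is exactly why $U_t$ was defined using \emph{four} consecutive copies of $\omega_t$ and why the normalization $\max(T_3, T_4) \leq \min(T_1, T_2)$ of (5.1) was arranged in the preceding paragraph. Beyond that, the argument is purely combinatorial, using only the different-fibres hypothesis and the primitivity of the words $\omega_j, \omega_t$; no hyperbolic or skew-product machinery enters.
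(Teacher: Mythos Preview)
Your proof is correct and reaches the same contradiction as the paper (that $pr\,p_j$ and $pr\,p_t$ lie on a common $\sigma$-orbit), but by a somewhat different route. The paper argues by hand: assuming the block $\omega_1\omega_1\omega_1\omega_1$ occurs inside the periodic sequence $pr\,p_3$, it notes that one copy of $\omega_1$ is tiled by copies of a cyclic shift $\bar\omega_3$ of $\omega_3$ with a remainder of length $r<T_3$; comparing how the \emph{next} copy of $\omega_1$ is tiled forces $\sigma^{r}(pr\,p_3)=pr\,p_3$, hence $r=0$ by primitivity of $\omega_3$, and then $\omega_1=\bar\omega_3^{\,m}$, so the two base orbits coincide. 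This is, in effect, a self-contained derivation of the relevant instance of the Fine--Wilf theorem. You instead invoke Fine--Wilf directly on the length-$4T_t$ overlap window to obtain the $\gcd$ period, and then use primitivity of both $\omega_j$ and $\omega_t$ to force $T_j=T_t$ and hence global equality of the two sequences. Your approach is slightly cleaner and makes explicit why four copies of $\omega_t$ together with the normalization $T_j\le T_t$ from~(5.1) suffice (namely, $4T_t\ge T_j+T_t$); the paper's approach has the advantage of being entirely elementary and not appealing to an external combinatorial lemma.
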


\begin{proof}
Without loss of generality, we prove relation $(\ref{5.3})$ for the point $p_3$ and the set $U_1$. To get a contradiction, assume that this relation does not hold. It means that there exists a number $K$ such that the word $\omega_1\omega_1\omega_1\omega_1$ takes the positions from $K-2T_1$ to $K+2T_1-1$ in the sequence $pr p_3$. In addition, by relation $(\ref{5.1})$, the word $\omega_1$ is longer than the word $\omega_3$. Consider the word $\omega_1$ starting from the $K$-position of the sequence $pr p_3$. The first $T_3$ symbols of this word are a cyclic permutation of the word $\omega_3$. Denote this permutation by $\bar{\omega}_3$. Hence, the word $\omega_1$ is covered by $m$-times repeated words $\bar{\omega}_3$ plus an ''addition'': $r$ first symbols from $\bar{\omega}_3$ ($0\leq r<T_3$).

However, the second word $\omega_1$ (the one that starts from the $(K+T_1)$-position of the sequence $pr p_3$, cf. Fig. 2) is also covered by words $\bar{\omega}_3$. On the one hand, it should start from $\bar{\omega}_3$ (since the first and the second words $\omega_1$ coincide); on the other hand, it should start from last $T_3-r$ symbols of $\bar{\omega}_3$ (cf. Fig. 2). It means that if we swap first $r$ symbols and last $T_3-r$ symbols in the word $\bar{\omega}_3$, then the word $\bar{\omega}_3$ will not change, i.e., $\sigma^{r}(pr p_3)=pr p_3$. Hence, $r=0$, but then the word $\omega_1$ is the $m$-times repeated word $\bar{\omega}_3$. Thus, the trajectories of the points $p_1$ and $p_3$ intersect, and we get a contradiction with the choice of the points $p_1$ and $p_3$. Relation~$(\ref{5.3})$ is proved. 
\end{proof}

{\small{
\begin{picture}(100,120)
\put(0,50){\line(1,0){350}}
\put(100,50){\line(0,1){40}}
\put(100,50){\line(0,-1){40}}
\put(45,35){$\omega_1$}
\put(230,50){\line(0,1){40}}
\put(230,50){\line(0,-1){40}}
\put(100,30){\vector(1,0){130}}
\put(230,30){\vector(-1,0){130}}
\put(170,15){$\omega_1$}
\put(140,50){\line(0,1){22}}
\put(100,65){\vector(1,0){40}}
\put(140,65){\vector(-1,0){40}}
\put(115,70){$\bar{\omega}_3$}
\put(180,50){\line(0,1){22}}
\put(140,65){\vector(1,0){40}}
\put(180,65){\vector(-1,0){40}}
\put(155,70){$\bar{\omega}_3$}
\put(220,50){\line(0,1){22}}
\put(180,65){\vector(1,0){40}}
\put(220,65){\vector(-1,0){40}}
\put(195,70){$\bar{\omega}_3$}
\put(260,50){\line(0,1){22}}
\put(220,65){\vector(1,0){40}}
\put(260,65){\vector(-1,0){40}}
\put(300,50){\line(0,1){22}}
\put(260,65){\vector(1,0){40}}
\put(300,65){\vector(-1,0){40}}
\put(270,50){\line(0,-1){15}}
\put(230,40){\vector(1,0){40}}
\put(270,40){\vector(-1,0){40}}
\put(248,30){$\bar{\omega}_3$}
\put(230,20){\vector(1,0){120}}
\put(350,50){\line(0,-1){40}}
\put(350,50){\line(0,1){40}}
\put(350,20){\vector(-1,0){120}}
\put(292,11){$\omega_1$}
\put(330,-10){Fig. 2}
\put(86,36){$K$}
\end{picture}
}
}
\\

\begin{lemma}
Suppose that $m>4T$ and a sequence $\beta\in\Sigma^2$ is such that a word $$\omega=\alpha_1\omega_3\ldots\omega_3\alpha_2$$
is repeated in it periodically, where the word $\omega_3$ is repeated precisely $m$ times in the formula, and the words $\alpha_1$ and $\alpha_2$ (whose length is more than $4T$) can not contain less than $T$ zeros in a row (however, it is allowed not to contain any zeros at all); then
\begin{equation}
\label{5.4}
O(\beta,\sigma)\cap pr V_t=\emptyset\qquad\textrm{for}\ t\in\{1,2\}.
\tag{5.4}
\end{equation}
In other words, the trajectory of the sequence $\beta$ with respect to the Bernoulli shift $\sigma$ that includes the word $\omega$ periodically does not intersect the sets $V_1$~and~$V_2$.  
\end{lemma}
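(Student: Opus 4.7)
My plan is to recast the conclusion combinatorially. A sequence lies in $pr V_t$ precisely when it contains four consecutive copies of some cyclic shift of $\omega_t$ around position~$0$, so $O(\beta,\sigma)\cap pr V_t\neq\emptyset$ would force $\beta$ to contain, somewhere, a subword $W$ of length $4T_t$ matching a window of $\omega_t^\infty$. Because $|W|=4T_t\leq 4T$ while $|\alpha_1|,|\alpha_2|>4T$ and $|\omega_3^m|=mT_3>4T$, the window $W$ must lie entirely inside one of the blocks $\alpha_1,\alpha_2,\omega_3^m$ of a single period of $\beta$, or straddle exactly one boundary between adjacent blocks (including the ``seam'' $\alpha_2\alpha_1$ joining consecutive periods). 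I will derive a contradiction in each location.

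Two combinatorial facts will drive the proof. The first is the Fine--Wilf periodicity lemma: a word with periods $p$ and $q$ and length $\geq p+q-\gcd(p,q)$ also has period $\gcd(p,q)$. The second is a zero-run bound on $\omega_t^\infty$: since $T_t>2$ forces $\omega_t$ to contain both $0$ and $1$ and $T_t$ is its minimal period, $\omega_t$ is not the all-zero word, so every run of zeros in $\omega_t^\infty$ has length at most $T_t-1<T$; moreover any window of $\omega_t^\infty$ of length $\geq 2T_t$ contains at least one zero-run that is strictly interior to the window (bounded by $1$s on both sides within the window).

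With these in hand, the cases go as follows. If $W\subseteq\omega_3^m$, then $W$ has both periods $T_3$ and $T_t$, Fine--Wilf yields period $d:=\gcd(T_3,T_t)$, and minimality of $T_3$ (via $|W|\geq T_3$) forces $d=T_3$, so $T_3\mid T_t$; swapping roles gives $T_3=T_t$, whence $\omega_3$ and $\omega_t$ are cyclic permutations and the $\sigma$-orbits of $pr p_3$ and $pr p_t$ coincide, contradicting ``different fibres''. If $W$ sits inside $\alpha_1$, inside $\alpha_2$, or across the seam $\alpha_2\alpha_1$, then $W$ contains an interior zero-run of length $<T$, which is also an interior zero-run of $\alpha_j$ or of $\alpha_2\alpha_1$; but the hypothesis (combined with the fact that any trailing zeros of $\alpha_2$ and leading zeros of $\alpha_1$ are themselves full zero-runs of length $\geq T$) says every such interior zero-run has length $\geq T$. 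Finally, if $W=W_\alpha W_3$ straddles a boundary between some $\alpha_j$ and $\omega_3^m$, I split on $|W_\alpha|$: if $|W_\alpha|\geq 2T_t$ the zero-run argument applies inside $W_\alpha$; if $|W_\alpha|<2T_t$, then $|W_3|>2T_t$, Fine--Wilf on $W_3$ gives period $T_3$ (so $T_3\mid T_t$), and then the global $T_t$-periodicity of $W$ transfers the $T_3$-periodicity of $W_3$ onto a full period of $\omega_t$ inside $W$, forcing $\omega_t$ to be $T_3$-periodic and hence $T_3=T_t$---the same-fibre contradiction again.

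I expect the straddling case to be the main obstacle: Fine--Wilf applied only to $W_3$ gives merely $T_3\mid T_t$, and one must invoke the global $T_t$-periodicity of $W$ to push $T_3$-periodicity back into $\omega_t$ itself. The threshold $|W_\alpha|=2T_t$ is natural because this is exactly the length needed for $W_\alpha$ to enclose an interior period of $\omega_t^\infty$ (and therefore an interior zero-run), while its complement automatically makes $|W_3|$ large enough for Fine--Wilf.
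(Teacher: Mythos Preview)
Your proposal is correct. The paper's own proof is more economical in one respect and more ad hoc in another. Instead of tracking the full window $W$ of length $4T_t$ and handling five placement cases, the paper observes that the \emph{center} position $K$ of $W$ must lie in some block; since every block has length exceeding $4T_t$, at least one of the two halves $\omega_t\omega_t$ of $W$ lies entirely inside that same block, collapsing your five cases to two (the seam $\alpha_2\alpha_1$ and all straddles disappear automatically). For the $\alpha_j$ case the paper uses exactly your interior zero-run argument. For the $\omega_3^m$ case, rather than invoking Fine--Wilf, the paper recycles the explicit cyclic-shift computation from the proof of Lemma~4: one sees directly that $\omega_t$ is tiled by copies of a cyclic shift $\bar\omega_3$ of $\omega_3$ with a remainder $r$, and comparing two consecutive copies of $\omega_t$ forces $r=0$, giving the same-fibre contradiction. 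Your use of Fine--Wilf is cleaner and more portable; the paper's center-position trick is slicker at reducing casework. Both routes rest on the same two ingredients you identified (the zero-run bound on $\omega_t^\infty$ and a periodicity clash with $\omega_3$), so the difference is organizational rather than conceptual.
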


\begin{proof}
Without loss of generality, we prove that relation $(\ref{5.4})$ holds for the point $\beta$ and the set $pr U_1$. To get a contradiction, suppose the contrary. It means that there exists a number $K$ such that the word $\omega_1\omega_1\omega_1\omega_1$ takes the positions from $K-2T_1$ to $K+2T_1-1$ in the sequence $\beta$. Two cases are possible:

a) The $K$-position is ''included'' in the word $\alpha_1$ (in the case when it is included in the word $\alpha_2$, we can apply the same reasoning). By construction, the word $\omega_1$ contains both zeros and ones. Hence, the word $\omega_1$ can not contain more than $T_1-1$ zeros. This fact and the fact that the word $\omega_1\omega_1$ is contained in the word $\alpha_1$ contradict to the properties of $\alpha_1$. 

b) The $K$-position is ''included'' in the word $\omega_3\ldots\omega_3$. Since the word $\omega_3$ is repeated $m$ times and $m>4T$, the word $\omega_1\omega_1$ is covered by a finite number of words $\omega_3$. Further reasoning for deriving the contradiction is similar with the proof of Lemma 4.

We got the contradiction in both possible cases. Hence, our assumptions are wrong. Lemma 5 is proved. 
\end{proof}

\section{Reduction of the proof in Case (A2) to Lemma 6}

Recall that a sketch  of the proof in Case (A2) was outlined at the beginning of Sec. 4. In this section we give the proof in Case (A2) with an exception of one lemma. 

Choose a mild skew product $G$ from Theorem~$\textrm{A}^{\prime}$ and suppose that it satisfies the conditions of Case (A2) (cf. the definition at the begining of Sec. 4). The following lemma plays a key role in the proof of Case (A2). In fact, it states that it is possible to construct ''as precise as we want'' pseudotrajectories with the required properties (they should ''go from'' the point $p_1$ and ''go to'' the point $p_2$, and their ''intermediate part'' should be ''separated'' from the trajectories $O(p_1,G)$ and $O(p_3,G)$).

{\small{
\begin{picture}(100,180)
\put(15,70){\circle*{3}}
\put(7,60){$p_2$}
\put(15,70){\vector(0,1){90}}
\put(15,70){\vector(0,-1){90}}
\put(5,70){\line(0,1){90}}
\put(5,70){\line(0,-1){90}}
\put(25,70){\line(0,1){90}}
\put(25,70){\line(0,-1){90}}
\put(30,-20){$V_2$}
\put(340,70){\circle*{3}}
\put(332,60){$p_1$}
\put(330,160){\line(0,-1){90}}
\put(340,160){\vector(0,-1){60}}
\put(330,-20){\line(0,1){90}}
\put(340,-20){\vector(0,1){60}}
\put(340,70){\line(0,1){90}}
\put(340,70){\line(0,-1){90}}
\put(350,70){\line(0,1){90}}
\put(350,70){\line(0,-1){90}}
\put(320,-20){$V_1$}
\put(85,70){\circle*{3}}
\put(85,70){\vector(0,1){90}}
\put(85,70){\vector(0,-1){90}}
\put(85,70){\line(-1,0){70}}
\put(85,70){\vector(-1,0){50}}
\put(55,70){\circle*{3}}
\put(60,60){$z$}
\put(75,60){$p_4$}
\put(250,70){\circle*{3}}
\put(250,70){\line(1,0){90}}
\put(250,70){\line(0,1){90}}
\put(250,70){\line(0,-1){90}}
\put(250,70){\circle{40}}
\put(250,160){\vector(0,-1){50}}
\put(250,-20){\vector(0,1){50}}
\put(255,60){$p_3$}
\put(340,70){\vector(-1,0){60}}
\put(300,70){\circle*{3}}
\put(300,60){$x$}
\put(235,70){\circle*{3}}
\put(235,70){\vector(0,1){90}}
\put(235,70){\vector(0,-1){90}}
\put(235,70){\vector(-1,0){120}}
\put(235,70){\line(-1,0){210}}
\put(239,63){$s$}
\put(180,70){\circle*{3}}
\put(180,60){$y$}
\put(85,70){\line(4,1){70}}
\put(155,87){\circle*{3}}
\put(155,87){\vector(-4,-1){30}}
\put(155,87){\vector(0,1){70}}
\put(155,87){\vector(0,-1){60}}
\put(158,80){$s$}
\put(155,87){\line(0,-1){100}}
\put(270,85){$N(d,p_3)$}
\put(330,-40){Fig. 3}
\end{picture}
}
}
\\

\bigskip
\bigskip

\begin{lemma}
Under our conditions, if $\delta$ is sufficiently small, then
\begin{enumerate}
\item[(6.a)] the one-dimensional unstable manifold of the point $p_1$ and the two-dimensional stable manifold of the point $p_3$ intersect;
\item[(6.b)] the two-dimensional unstable manifold of the point $p_4$ and the one-dimensional stable manifold of the point $p_2$ intersect;
\item[(6.c)] given any $d$, there exists a hyperbolic periodic point $s$ such that
\begin{itemize}
\item $s\in N(d,p_3)$, and the unstable manifold of the point $s$ is one-dimensional,
\item the trajectory $O(s,G)$ does not intersect the sets $V_1$ and $V_2$; 
\end{itemize}
\item[(6.d)] there exists a point $y\in W^u(s)\cap W^s(p_4)$ whose trajectory $O(s,G)$ does not intersect the sets $V_1$ and $V_2$. 
\end{enumerate}
\end{lemma}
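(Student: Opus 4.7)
The plan is to treat items (6.a) and (6.b) as symmetric warm-ups that isolate the fiber mechanism, then attack (6.c) by a symbolic construction based on Lemma~5 combined with Theorem~1, and finally obtain (6.d) by a second application of the (6.a) template with $s$ and $p_4$ in place of $p_1$ and $p_3$. The main obstacle will be (6.c), to which the authors devote all of Section~7.

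For (6.a), I would first use the topological transitivity of the Bernoulli shift on the horseshoe base to pick a heteroclinic base point $\bar x\in W^u(pr\,p_1)\cap W^s(pr\,p_3)$. Because $p_1$ has type $(2,1)$, its local unstable manifold $W^u_{\textrm{loc}}(p_1)$ is one-dimensional and, by item~4) of Sec.~4, projects injectively to the base; it is therefore the graph of a continuous function of the base unstable coordinate. Lifting $\bar x$ along this graph produces a unique $z\in W^u(p_1)$ with $pr\,z=\bar x$. Under forward iteration, $pr(G^n z)=\sigma^n(\bar x)\to pr\,p_3$; since $p_3$ has type $(2,1)$, the period-$T_3$ fiber return map at $p_3$ has an attracting fixed point whose basin, because $g_0,g_1$ are $\delta_0$-close to the identity with $\delta_0$ small, is the whole circle minus the repelling fixed point. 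After pulling $z$ back finitely many times and then pushing forward, its fiber coordinate lies in that basin, so $G^n z\to O(p_3,G)$ and hence $z\in W^s(p_3)$. Item (6.b) is proved by the identical argument with the direction of time reversed and $(p_4,p_2)$ playing the roles of $(p_1,p_3)$; the fiber attractor now being that of the inverse period map at $p_4$, which exists because $p_4$ is of type $(1,2)$.

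For (6.c), given $d>0$ I would construct symbolically a base periodic sequence with period word of the form $\omega=\omega_3^{m}\alpha$, choosing $m$ so large that the symbols of this sequence agree with those of $pr\,p_3$ on a window around position zero long enough to guarantee $d_{\Sigma^2}$-closeness, and choosing $\alpha$ of length greater than $4T$ so as to contain no run of $T$ consecutive zeros. Lemma~5 then guarantees that the $\sigma$-orbit of this sequence misses $pr\,V_1\cup pr\,V_2$. Next I would apply Theorem~1 (density of type-$(2,1)$ hyperbolic periodic points inside every $C^1$-small neighborhood of $G_0$) to obtain a hyperbolic periodic lift $s$ of this base orbit whose unstable manifold is one-dimensional, with the symbolic representation of $pr\,s$ exactly equal to the constructed sequence. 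The delicate step is producing a lift that is really of type $(2,1)$ rather than $(1,2)$, which is precisely the content of Lemmas~8 and~9 in Section~7 and is the main obstacle of the whole lemma.

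For (6.d), I would mimic the (6.a) argument with $s$ and $p_4$ taking the roles of $p_1$ and $p_3$: pick a base heteroclinic orbit from $pr\,s$ to $pr\,p_4$, lift it uniquely along the graph $W^u_{\textrm{loc}}(s)$, and use the fiber attractor at $p_4$ encountered in the dual direction to obtain $y\in W^u(s)\cap W^s(p_4)$. For the avoidance condition $O(y,G)\cap(V_1\cup V_2)=\emptyset$, I would choose the base heteroclinic representative as the concatenation of a left-infinite tail of $pr\,s$'s period word, a bounded transition segment built to satisfy the hypotheses of Lemma~5, and a right-infinite tail of $pr\,p_4$'s period word; the left tail avoids $pr\,V_1\cup pr\,V_2$ by item (6.c), and the right tail avoids it by Lemma~4 applied to $p_4$ in place of $p_3$, so a final application of Lemma~5 with constants tuned to the combined trajectory shows $O(y,G)$ stays out of $V_1\cup V_2$ for all time.
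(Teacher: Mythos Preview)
Your treatment of (6.a)--(6.b) is sound in spirit, but the analogy you draw for (6.d) breaks down and this is the main gap. In (6.a) the target $p_3$ has type $(2,1)$: it is a fiber \emph{attractor}, $W^s(p_3)$ is two-dimensional, and once you lift a base heteroclinic into $W^u(p_1)$ the fiber coordinate lands in the (codimension-zero) basin of $p_3$. In (6.d) the target $p_4$ has type $(1,2)$: it is a fiber \emph{repeller}, so $W^s(p_4)$ is one-dimensional and meets each fiber over $W^s(pr\,p_4)$ in a \emph{single} point. There is no ``fiber attractor at $p_4$'' to invoke, and a naive lift of a base heteroclinic into a one-dimensional $W^u(s)$ has no reason to hit that single point. (There is in fact a misprint in the statement of (6.c): the proof in Subsec.~7.3 and Fig.~3 make clear that $s$ is constructed of type $(1,2)$, so $W^u(s)$ is two-dimensional.) The paper's argument exploits exactly this: since $s$ is a fiber repeller, $W^u(s)$ meets the fiber through $s$ in an arc $J_\omega$ of definite length (essentially $W^+$); the base heteroclinic $\omega$ is then built with a controlled block of zeros between the $pr\,s$-tail and the $pr\,p_4$-tail, and these near-rotations are used to rotate the image of $J_\omega$ until it covers the single point $\phi_\omega$ lying in $W^s(p_4)$ (Lemma~10 and the paragraph following it).

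Your plan for (6.c) also has two problems. First, you have the hypothesis of Lemma~5 inverted: the filler words $\alpha_1,\alpha_2$ must contain \emph{at least} $T$ zeros in any run of zeros (or none at all), not ``no run of $T$ consecutive zeros''. Second, and more seriously, Theorem~1 gives only density of hyperbolic periodic points of each type in $\Sigma^2\times S^1$; it does not hand you a hyperbolic periodic lift over a \emph{prescribed} base periodic orbit. A nearby dense periodic point will generically have a completely different base projection, destroying the Lemma~5 conclusion. The paper instead reruns the Gorodetski--Ilyashenko construction itself (Lemmas~8 and~9), modifying the word-building algorithm so that every block appended contains at least $ST$ zeros; this forces the resulting base word to satisfy Lemma~5 by design, and only then does the argument from~\cite{4} supply the fiber coordinate $\phi_0$ making $s=(\omega,\phi_0)$ hyperbolic of the required type.
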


Lemma 6 will be proved in Sec. 7. Choose points $x\in W^u(p_1)\cap W^s(p_3)$ and $z\in W^u(p_4)\cap W^s(p_2)$.
The phase portrait of the mild skew product $G$ is depicted in Fig. 3. For convenience, the symbol $s$ denotes all points of the trajectory $O(s,G)$ from Lemma 6. Let us show how to finish the proof for Case~(A2) using Lemma 6. At first, we need to choose a sufficiently small number $\varepsilon$. We do it in several steps.

The points $x$ and $z$ can be chosen so close to the points $p_1$ and $p_2$ that there exists a number $\varepsilon_0$ such that 
$$N(\varepsilon_0,O_{-}(x,G))\subset V_1,\quad N(\varepsilon_0,O_{+}(z,G))\subset V_2;$$
$$N(\varepsilon_0,O_{-}(x,G))\cap N\left(\varepsilon_0, O(z,g)\cup O(y,G)\right)=\emptyset;$$
 $$N(\varepsilon_0,O_{+}(z,G))\cap N\left(\varepsilon_0,O(x,G)\cup O(y,G)\right)=\emptyset,$$
where $y$ is an arbitrary point from Lemma 6. There exists a number $\varepsilon<\varepsilon_0/3$ such that 
\begin{itemize}
\item the statement of Main Lemma holds for the mapping $G$, the point $z$ and the point $p_2$;
\item the statement of Main Lemma corollary holds for the mapping $G$, the point $x$ and the point $p_1$. 
\end{itemize}   
It was shown in Sec. 4 (cf., the choice of $\varepsilon_0$), that both Main Lemma and its corollary can be applied to the mild skew products. 

Now, when $\varepsilon$ is chosen, we can construct the desired pseudotrajectory. Choose an arbitrary number $d<\varepsilon$. Suppose that $s$ is a point corresponding to the number $d/3$, and $y$ is a point from item (6.d) corresponding to the point $s$ whose existence is proved in Lemma 6. There exist numbers $k_1$, $k_2$, $k_3$ and $k_4$ such that 
$$x_{k_1+1}\in N(d/3,p_3),\quad y_{k_2}\in N(d/3,s),$$
$$y_{k_3+1}\in N(d/2,p_4),\quad z_{k_4}\in N(d/2,p_4).$$
Let us construct the $d$-pseudotrajectory $\xi=\{\xi_k\}$ in the following way:
$$\xi_k=x_k\textrm{ for }k\leq k_1,\quad\xi_{k}=y_{k-k_1-1+k_2}\textrm{ for }k_1<k\leq k_1+1+k_3-k_2,$$
$$\xi_k=z_{k-k_1-2-k_3+k_2+k_4}\textrm{ for }k>k_1+1+k_3-k_2.$$

Suppose that the mild skew product $G$ has OSP, i.e., there exists a point $q$ such that relation~$(\ref{1.1})$ holds for the point $q$ and the pseudotrajectory $\xi$. 

By the choice of $\varepsilon$, the statement of Lemma 1 holds for the constructed pseudotrajectory $\xi$ and the point $p_2$. Hence,
\begin{equation}
\label{6.1}
p\in W^s(p_2).
\tag{6.1}
\end{equation}
By similar reasons, the statement of Lemma 1 corollary holds for the pseudotrajectory $\xi$ and the point $p_1$. Hence, 
\begin{equation}
\label{6.2}
p\in W^u(p_1).
\tag{6.2}
\end{equation}
But existence of a point $p$ that satisfies both inclusions $(\ref{6.1})$ and $(\ref{6.2})$ contradicts the conditions of Case (A2). The derived contradiction means that in Case (A2) $G\notin\textrm{OSP}$.

Thus, in both possible cases we concluded that $G\notin\textrm{OSP}$. In order to finish the proof of Theorem~$\textrm{A}^{\prime}$, it remains to prove only Lemma 6.

\section{Proof of Lemma 6}

The proof of Lemma 6 is based on the proofs of certain lemmas from \cite{4}, in a great extent. 

\subsection{Item (6.c): main notations}

We start from the proof of item (6.c). By Theorem 1, there exists a hyperbolic periodic point $s$ that satisfies all conditions of item (6.c) except, perhaps, the last one: 
\begin{equation}
\label{7.1}
O(s,G)\cap(V_1\cup V_2)=\emptyset.
\tag{7.1}
\end{equation}
In fact, we repeat the major part of proof of Theorem 1 (which was formulated in Sec. 1) from the paper \cite{4}, but we need to check that, in addition to other properties, the point $s$ can be required to satisfy relation $(\ref{7.1})$. The idea of the proof is to construct the point $s$ in such a way that it would satisfy the conditions of Lemma~5. Then, relation $(\ref{7.1})$ holds, by Lemma 5. 

We assume that the sets $W_0(g_0)$ and $W_1(g_1)$ are the balls of radius $\delta$. Let us introduce the following notations:
$$\bar{f}_m[\omega]=f_{\sigma^{m-1}(\omega)}\circ\ldots\circ f_{\sigma(\omega)}\circ f_{\omega},$$
$$\bar{f}_{-m}[\omega]=f^{-1}_{\sigma^{-m}(\omega)}\circ\ldots\circ f^{-1}_{\sigma^{-1}(\omega)},$$
$$\bar{f}_0[\omega]=\textrm{id}.$$
We need the following lemma:

\begin{lemma}[Lemma on the errors]
There exists a number $K$ independent on the choice of $\delta$ such that if the inequality 
$$d_{\Sigma^2}(\omega,\omega^{\prime})\leq 2^{-m}$$
holds for a number $m\in\textbf{N}$ and points $\omega,\omega^{\prime}\in\Sigma^2$, then
$$d_{C^0}(\bar{f}_{\pm m}[\omega],\bar{f}_{\pm m}[\omega^{\prime}])\leq\gamma:=K\delta^{\beta},$$
where $\beta:=1-\frac{\ln L}{\ln 2^{\alpha}}$, and numbers $L$ and $\alpha$ were defined in the conditions of Theorem 1.   
\end{lemma}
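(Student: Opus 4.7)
The plan is to reduce $d_{C^0}(\bar f_m[\omega],\bar f_m[\omega'])$ to a telescoping sum in the sequence index, bound each summand by two complementary estimates, and then optimize the split between them. Introduce for $0\le k\le m$ the interpolations
\[
A_k:=\bar f_{m-k}[\sigma^k(\omega)]\circ\bar f_k[\omega'],
\]
so that $A_0=\bar f_m[\omega]$, $A_m=\bar f_m[\omega']$, and $A_k$ and $A_{k+1}$ differ only in the substitution of $f_{\sigma^k(\omega)}$ by $f_{\sigma^k(\omega')}$ inside the common outer composition $\bar f_{m-k-1}[\sigma^{k+1}(\omega)]$. Using $(\ref{2.4})$ to bound the Lipschitz constant of this outer composition by $L^{m-k-1}$ yields
\[
d_{C^0}(\bar f_m[\omega],\bar f_m[\omega'])\le\sum_{k=0}^{m-1}L^{m-k-1}\,d_{C^0}(f_{\sigma^k(\omega)},f_{\sigma^k(\omega')}).
\]

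The hypothesis $d_{\Sigma^2}(\omega,\omega')\le 2^{-m}$ forces $\omega,\omega'$ to agree in positions $-m,\dots,m-1$; in particular $d_{\Sigma^2}(\sigma^k(\omega),\sigma^k(\omega'))\le 2^{-(m-k)}$ for $0\le k\le m$ and the zeroth symbols of $\sigma^k(\omega)$ and $\sigma^k(\omega')$ coincide for $0\le k\le m-1$. Two independent estimates then apply to each factor: the \emph{trivial} bound $d_{C^0}(f_{\sigma^k(\omega)},f_{\sigma^k(\omega')})\le 2\delta$, because both diffeomorphisms lie in the common radius-$\delta$ ball $W_{\omega_k}$ around $g_{\omega_k}$ by $(\ref{2.3})$, and the \emph{H\"older} bound $d_{C^0}(f_{\sigma^k(\omega)},f_{\sigma^k(\omega')})\le C\cdot 2^{-(m-k)\alpha}$ from $(\ref{2.5})$. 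Substituting $j=m-k$, the telescope is controlled by $\sum_{j=1}^{m}L^{j-1}\min\bigl(2\delta,\,C\cdot 2^{-j\alpha}\bigr)$.

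The crossover $2\delta=C\cdot 2^{-j\alpha}$ occurs at $j^*:=\alpha^{-1}\log_2(C/2\delta)$. Split the sum there and use the trivial bound for $j\le j^*$, the H\"older bound for $j>j^*$. The definition of $\beta$ is equivalent to $L=2^{\alpha(1-\beta)}$, so $L\cdot 2^{-\alpha}=2^{-\alpha\beta}<1$: this is exactly where the hypothesis $L<2^\alpha$ from $(\ref{2.4})$ is essential, for without it the geometric tail below would diverge. A direct calculation gives $L^{j^*}=(C/2\delta)^{1-\beta}$, so the trivial piece is bounded by $2\delta\cdot L^{j^*}/(L-1)=C^{1-\beta}(2\delta)^\beta/(L-1)$, and the H\"older tail by the convergent geometric series $C\cdot 2^{-\alpha}(L\cdot 2^{-\alpha})^{j^*}/(1-L\cdot 2^{-\alpha})=C^{1-\beta}(2\delta)^\beta\cdot 2^{-\alpha}/(1-L\cdot 2^{-\alpha})$. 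Both contributions are of order $\delta^\beta$ with a constant depending only on $C,\alpha,L$ (and hence independent of $\delta$ and of $m$), giving the required bound $K\delta^\beta$. The inverse case $\bar f_{-m}[\omega]$ is handled by the same scheme: $(\ref{2.4})$ also bounds $\|Df^{-1}_\omega\|\le L$; the analog $d_{\Sigma^2}(\sigma^{-k}(\omega),\sigma^{-k}(\omega'))\le 2^{-(m-k)}$ still holds for $0\le k\le m$; and $(\ref{2.5})$ passes to inverses at the price of a multiplicative constant that is absorbed into~$K$.

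The delicate point is precisely this interpolation step. Using only the H\"older estimate produces a total bounded by a constant independent of $\delta$ (in particular not tending to zero as $\delta\to 0$), while using only the trivial estimate produces a sum that diverges with~$m$. Balancing the two bounds at the crossover $j^*$ is what simultaneously extracts the correct exponent $\beta$ and keeps the bound uniform in $m$.
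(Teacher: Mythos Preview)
Your argument is correct. The paper itself omits the proof entirely, stating only that it is an almost word-for-word repetition of Lemma~3.1 in Gorodetski--Ilyashenko~\cite{4} up to trivial adjustments for the $C^1$ setting; your telescoping interpolation combined with the crossover split between the $2\delta$ bound (from~$(\ref{2.3})$) and the H\"older bound (from~$(\ref{2.5})$) is exactly the standard computation behind that reference.
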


\begin{remark}
The proof of Lemma 7 is an almost word-by-word repetition of the proof of Lemma 3.1 from the paper \cite{4}: it is necessary to make some trivial changes concerning the transition to the $C^1$-topology. We do not give it here.
\end{remark}

We need notations from the paper \cite{4}. Consider the word $\bar{\beta}=\beta_{-m}\ldots\beta_{m-1}$. By definition, put 
$$C_{\bar{\beta}}=\{\omega=\{\alpha_k\}_{k\in\textbf{Z}}\in\Sigma^2|\ \alpha_k=\beta_k\ \textrm{for}\ -m\leq k\leq m-1\}.$$
The set $C_{\bar{\beta}}$ is a cylinder neighborhood in the set $\Sigma^2$. By definition, put
$$V_{\pm}[\bar{\beta}](\phi)=\{\bar{f}_{\pm m}[\omega](\phi)|\ \omega\in C_{\bar{\beta}}\}.$$

Put $\Gamma_m=C_{\bar{\beta}}$ for a fixed word $\bar{\beta}=\beta_{-m}\ldots\beta_0\ldots\beta_{m-1}$. Define the sets $V_m(\phi)$ and $V_{-m}(\phi)$ for $\phi\in S^1$ by the relations
$$V_{\pm m}(\phi)=\{\bar{f}_{\pm m}[\omega](\phi)\ |\ \omega\in\Gamma_m\}.$$ 
Note that, by definition, $V_{\pm}[\bar{\beta}](\phi)=V_{\pm m}(\phi)$.

By Lemma 7, 
$$\textrm{diam}V_{\pm m}(\phi)\leq\gamma$$
not depending on the choice of a point $\phi$ and length of the word $\bar{\beta}$. By the definition of $\gamma$, the number $\delta$ can be chosen in such a way that $\gamma<b/40$ (the numbers $a$ and $b$ were defined in Sec. 2, when the diffeomorphisms $g_0$ and $g_1$ were being defined, and this numbers can be considered to be any sufficiently close to $1$ and $0$, respectively, but preliminary fixed numbers).

Note that there exist arches $W^+, W^-\subset S^1$, whose lengths are not less than $1/4-\delta$, such that the mapping $f_{\omega}$ expands the arch $W^+$ (with an expansion constant not less than $a-\delta$) and contracts the arch $W^-$ for any sequence $\omega$ with $\omega_0=1$, where the symbol $\omega_0$ stands at the zero position. By definition, put
$$P=\{p\in S^1| p\textrm{ is the attractor of the mapping }f_{\omega},\ \omega\in\Sigma^2, \omega_0=1\},$$
$$Q=\{q\in S^1| q\textrm{ is the repeller of the mapping }f_{\omega},\ \omega\in\Sigma^2, \omega_0=1\}.$$
Note that the values of $\textrm{diam}P$ and $\textrm{diam}Q$ are of order $\delta$, and they are not more than $\gamma=K\delta^{\beta}$ if $\delta$ is sufficiently small. Now, when the main notations are introduced, we can formulate and prove main lemmas.

\subsection{Item (6.c): main lemmas}
By definition, put $S=[1/(b-\delta)]$, where $[\cdot ]$ denotes the integer part. Note that if $\delta$ is sufficiently small, then $S$ does not depend on the choice of $\delta$.

We need the following lemma, which is a generalization of Lemma 3.3 from the paper~\cite{4}:

\begin{lemma}  
Let $\alpha=\alpha_{-n}\ldots\alpha_{n-1}$ be a word, and $\phi_1,\phi_2\in\nobreak S^1$ be two distinct points. Then there exists a word 
$$\bar{\beta}=\beta_{-m}\ldots\beta_{-n-1}\alpha_{-n}\ldots\alpha_{n-1}\beta_n\ldots\beta_{m-1}$$
such that the words $\beta_{-m}\ldots\beta_{-n-1}$ and $\beta_n\ldots\beta_{m-1}$ can not have less than $T$ zeros in a row and 
$$d_{S^1}(V_{\pm}[\bar{\beta}](\phi_1),V_{\pm}[\bar{\beta}](\phi_2))\geq 2b.$$
Hereinafter, if no additional remarks are made, we assume that distance between two sets in the circle is length of the minimal arch connecting the points of this sets. 
\end{lemma}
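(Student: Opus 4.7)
Plan. The idea is to build the forward tail $\beta_n\ldots\beta_{m-1}$ and the backward tail $\beta_{-m}\ldots\beta_{-n-1}$ of $\bar\beta$ independently, using only two building blocks: individual symbols $1$ (which, when $\omega_0=1$, make $f_\omega$ expand $W^+$ by factor $\geq a-\delta>1$, and $f_\omega^{-1}$ expand $W^-$ by the same factor) and blocks of $\geq T$ consecutive $0$'s (each zero being $C^1$-close to the rigid rotation $g_0$ by angle $b$). With this convention the no-short-zero-runs constraint is satisfied automatically. By Lemma~7, each blob $V_\pm[\bar\beta](\phi_i)$ has diameter $\leq\gamma$, so it suffices to exhibit one reference $\omega_*\in C_{\bar\beta}$ with
$$d_{S^1}(\bar f_{\pm m}[\omega_*](\phi_1),\bar f_{\pm m}[\omega_*](\phi_2))\geq 2b+2\gamma;$$
the triangle inequality then gives $\geq 2b$ between the blobs.

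For the forward tail, apply the right half of the $\alpha$-portion of $\omega_*$ to $\phi_i$ to obtain distinct $\psi_i\in S^1$ with $d_0:=d_{S^1}(\psi_1,\psi_2)>0$. If $d_0\geq 2b+2\gamma$, fill the tail with $1$'s and stop. Otherwise $d_0<2b+2\gamma<1/4-\delta$ (choose $\delta$ so small that $\gamma<b/40$), so the pair $\{\psi_1,\psi_2\}$ fits inside some rotate of $W^+$. Insert a few zero-blocks of lengths drawn from $\{T,T+1,\ldots\}$, separated by $1$'s, so as to rotate $\{\psi_1,\psi_2\}$ into $W^+$; this is possible because the set of admissible rotation angles has length $\geq(1/4-\delta)-d_0>0$, and the additive semigroup modulo $1$ generated by $\{(T+k)b+O(\delta):k\geq 0\}$ has resolution finer than this length. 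Next, append enough $1$'s that the separation expands past $2b+2\gamma$: while both points lie in $W^+$, each $1$-step multiplies their distance by $\geq a-\delta$, and we stop after $N=O(\log((2b+2\gamma)/d_0))$ such steps, well before the separation can reach the length of $W^+$. Pad the remaining positions with $1$'s.

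The backward tail is built symmetrically, with $f_\omega^{-1}$ in place of $f_\omega$, $W^-$ playing the role of $W^+$, and the rotation sense of $g_0$ reversed. The main obstacle is justifying the rotation step---namely, that restricting to zero-blocks of length $\geq T$ still permits hitting the admissible arc. This is automatic when $b$ is irrational; in the rational case $b=p/q$ one uses blocks of differing lengths $T+k$ to realize all residues in $\{j/q\}$, whose resolution $1/q$ is finer than $(1/4-\delta)-d_0$ once $\delta$ is small. A secondary concern is that the cumulative $C^1$-perturbation error of each block relative to its idealized model ($g_0$ or $g_1$) is bounded, again via Lemma~7, by $\gamma$---which is precisely the $2\gamma$ slack already built into the separation target.
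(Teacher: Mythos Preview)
Your overall strategy---reduce to a single reference sequence via the $\gamma$-bound from Lemma~7, then alternate rotation by zero-blocks and expansion by $1$'s---is close in spirit to the paper's, but there is a genuine gap in the expansion step.

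You write: ``append enough $1$'s \ldots while both points lie in $W^+$, each $1$-step multiplies their distance by $\geq a-\delta$, and we stop \ldots well before the separation can reach the length of $W^+$.'' The problem is that keeping the \emph{separation} below $|W^+|$ does not keep the \emph{pair} inside $W^+$. The map $f_\omega$ with $\omega_0=1$ is a perturbation of $g_1$, which pushes every point of $W^+$ away from the repeller $q$ and toward the attractor $p$; after a bounded number of $1$-steps (depending on where in $W^+$ you started, not on the separation) both points leave $W^+$, and from then on further $1$-steps may land in the contracting region and shrink the separation. To make your argument work you would need to place the pair within distance of order $d_0$ from $q$ before expanding, and then justify that the rotation stage can achieve this precision uniformly in $d_0$ despite the $\delta$-errors; you do not do this, and it is not obvious. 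The paper avoids the issue by interleaving: at each stage it adds either one $1$ followed by $ST$ zeros, or $ST+1$ zeros, according to whether the current pair lies in $W^+$; the zeros rotate the pair roughly once around the circle, and the key estimate $(a-\delta)(1-\delta)^{(TS+1)(S+1)+TS}>1$ (for $\delta$ small) shows net expansion per cycle.

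A second, smaller gap: you build the forward and backward extensions independently, but the word $\bar\beta$ runs from position $-m$ to $m-1$, so the two extensions must have equal length. Your proposed fix, ``pad the remaining positions with $1$'s,'' is dangerous for exactly the reason above: padding one side with $1$'s can drive the pair into $W^-$ and contract the separation you just achieved. The paper's algorithm adds symmetrically to both ends at every step, so the length constraint is built in.
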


\begin{remark}
In general, the proof of Lemma 8 is similar with the proof of Lemma 3.3 from the paper \cite{4}. All changes in this proof are connected only with the restriction on a number of zeros in the words from the lemma. We give only an outline of the proof making an accent on necessary changes.  
\end{remark}

\begin{proof}
We construct the word $\beta^l=\beta_{-l}\ldots\beta_{l-1}$ inductively starting from the word $\alpha$ and adding by turns from one or another end of the word either $ST+1$ zeros or one unit and $ST$ zeros. We follow the algorithm described below. When the algorithm stops, our construction is completed (the reasoning is by induction on~$l$). The algorithm consists of two steps. 

Step 1. By definition, put
$$M_{\pm l}=\min_{\omega\in C_{\beta^l}}d_{S^1}(\bar{f}_{\pm l}[\omega](\phi_1),\bar{f}_{\pm l}[\omega](\phi_2)).$$
Check the following conditions:
$$M_l>3b\qquad\qquad\qquad\textrm{(B1)}\qquad\textrm{and}\qquad M_{-l}>3b.\qquad\qquad\qquad\textrm{(B2)}$$
If both conditions hold, then the algorithm stops, further we prove that in this case the constructed word satisfies the required conditions. If at least one of conditions (B1), (B2) is violated, we go to Step 2.

Step 2. By definition, put
$$W_{\pm l}=V_{\pm}[\beta^l](\phi_1)\cup V_{\pm}[\beta^l](\phi_2).$$
Check the following conditions:
$$W_l\subset W^+\qquad\qquad\qquad\textrm{(C1)}\qquad\textrm{and}\qquad W_{-l}\subset W^-.\qquad\qquad\qquad\textrm{(C2)}$$
If condition (C1) (condition (C2), respectively) holds, we add from the right (from the left, respectively) one unit and $ST$ zeros, and if it is violated, we add from the right (from the left, respectively) $ST+1$ zeros. Then, we return to Step 1 again. Denote by $\mathcal{M}$ the set of all $l$ for which we return to Step 1.  

In fact, it is proved in Proposition 3.1 from the paper \cite{4} that if the algorithm stops, then the word $\bar{\beta}$ constructed by the algorithm satisfies all conditions of Lemma 8.

In order to finish the proof of Lemma 8, it is enough to show just that the described algorithm stops after a finite number of steps. Suppose the contrary, i.e., we have constructed some growing sequence of (symmetrical) words $\beta^l$, which defines the two-sided sequence $\omega$. By definition, put
$$\phi^{\pm}_{jl}=\bar{f}_{\pm l}[\omega](\phi_j),\qquad j=1,2;$$
$$\delta^{\pm}_l=d_{S^1}(\phi^{\pm}_{1l},\phi^{\pm}_{2l}).$$
By the construction of the sequence $\omega$, $\phi^{\pm}_{jl}\in V_{\pm}[\beta^l](\phi_j)$. Hence, by Lemma 7 and by the definition of the numbers $M_{\pm l}$, 
$$\delta^{\pm}_{l}-2K\delta^{\beta}\leq M_{\pm l}\leq\delta^{\pm}_l.$$
By definition, put
$$W^{\pm}_l=V_{\pm}[\beta^l](\phi_1)\cup V_{\pm}[\beta^l](\phi_2).$$
Note that $W^{\pm}_l=W_{\pm l}$.
To continue the proof of Lemma 8, we need the following statement: 

\begin{proclaim}{Proposition 1} 
There exists a number $m\in\mathcal M$ such that $\delta^\pm_m>1/16$.
\end{proclaim}

Note that Proposition 1 implies that the described algorithm stops after a finite number of steps. Indeed, the inequality $M_{\pm m}>3b$ holds for the number $m\in\mathcal{M}$ from Proposition 1 (of course, if $b$ is sufficiently small compared to~$1/16$).

\begin{proof}[Proof of Proposition 1]
The diffeomorphism $f_{\sigma^l(\omega)}$ for $\omega_l=0$ maps any point $\phi$ to some point of the arch $[\phi+b-\delta,\phi+b+\delta]$, and maps any arch of length $\lambda$ into an arch of length $\lambda^\prime\in((1-\delta)\lambda,(1+\delta)\lambda)$. The diffeomorphism $f_{\sigma^l(\omega)}$ for $\omega_l=0$ maps any arch of length $\lambda$ that is contained in $W^+$ into an arch of length $\lambda^\prime\in((a-\delta)\lambda,(a+\delta)\lambda)$. And, if $\omega_l=1$, then $\phi^+_{1l},\phi^+_{2l}\in W^+$, by construction.

Let us show that if $\delta^+_l<1/8$ ($1/8$ is approximately one half of the arch~$W^+$), then the sequence $\omega$ can not have more than $(TS+1)(S+\nobreak1)+TS$ zeros in a row after $\omega_l$. Indeed, we apply the mappings $\delta$-close to the rotations by angles $(TS+1)b$, $(2TS+\nobreak2)b,\allowbreak\ \ldots,\ (TS+1)(S+1)b$; hence, one of this rotations maps the shortest one of the arches $(\phi^+_{1l},\phi^+_{2l})$ into the arch $W^+$ (since $W^+$ is sufficiently large). Suppose that it is the rotation by angle $(TS+1)\ell b$. If $l-1\in\mathcal{M}$, then $\omega_{l+(TS+1)\ell b}=1$, by construction. Since, it is possible that $l-1\notin\mathcal{M}$, it is necessary to take into consideration no more than $ST$ zeros required to ''get to'' the element of $\mathcal{M}$, i.e., to go to Step 1.

Hence,
\begin{equation}
\label{7.2}
\delta^+_{l+(TS+1)(S+1)+TS}>(a-\delta)(1-\delta)^{(TS+1)(S+1)+TS}\delta^+_l\qquad\textrm{for}\ \delta^+_l<1/8.
\tag{7.2}
\end{equation}
If $\delta$ is sufficiently small, then $a-\delta>1$. Since $(1-\delta)^{(TS+1)(S+1)+TS}$ is close to $1$,
$$(a-\delta)(1-\delta)^{(TS+1)(S+1)+TS}>1$$
if $\delta$ is sufficiently small.
Therefore, the sequence $\{\delta^+_n\}$ contains an element $\delta^+_n\geq1/8$. Let us prove that 
\begin{equation}
\label{7.3}
\delta^+_k>1/16\qquad\textrm{for all\ }k>n.
\tag{7.3}
\end{equation}
Suppose the contrary, i.e, there exists a number $k>n$ such that $\delta^+_k\leq 1/16$. Choose a number $t\in[n,k)$ such that
$$\delta^+_t\geq1/8,\quad\delta^+_p<1/8\qquad\textrm{for all }p\in(t,k].$$
Hence, from relation $(\ref{7.2})$ it follows that $k<t+(TS+1)(S+1)+TS+1$.
Indeed, since $\delta^+_{t+1}<1/8$, 
$$\delta^+_{t+(TS+1)(S+1)+TS+1}>(a-\delta)(1-\delta)^{(TS+1)(S+1)+ST}\delta^+_{t+1}\geq$$
$$\geq(a-\delta)(1-\delta)^{(TS+1)(S+1)+ST+1}\delta^+_t>1/8.$$     
The last inequality holds, since $\delta^+_t\geq1/8$, and number $\delta$ can be chosen so small that 
$$(a-\delta)(1-\delta)^{(TS+1)(S+1)+ST+1}>1.$$
Thus, $k<t+(TS+1)(S+1)+TS+1$. Similar reasoning shows that there can be no ones among elements of $\omega_l$ for $t+1\leq l\leq k$. But then, 
$$\delta^+_k\geq(1-\delta)^{(TS+1)(S+1)+2}1/8>1/16$$
(for a sufficiently small $\delta$), and we get a contradiction with the choice of $k$. The derived contradiction proves inequality $(\ref{7.3})$.

Similarly, it can be proved that there exists a number $n^{\prime}$ such that $\delta^-_k>1/16$ for $k>n^{\prime}$. Our reasoning implies that we can assume $k,k^{\prime}\in\mathcal{M}$. It proves Proposition 1, and, hence, Lemma 8.
\end{proof}

\end{proof}

Consider the maximal arches $\bar{W}^+$ and $\bar{W}^{-}$ such that $$N(3\gamma,\bar{W}^+)\subseteq W^+\quad\textrm{and}\quad N(3\gamma,\bar{W}^-)\subseteq W^-.$$
We assume $\gamma$ to be sufficiently small, hence, 
$$Q\subset \bar{W}^+\quad\textrm{and}\quad P\subset \bar{W}^-.$$
We need the following lemma. It is similar with Lemma 3.4 from the paper~\cite{4}: we added only one new item, item (9.a), and slightly strengthened items (9.b) and (9.c). We give only an outline of the proof of Lemma 9, emphasizing the necessary changes. In addition to it, note that above-mentioned Lemma~3.4 contains one more item, which we do not need, that is why we omit it.  

\begin{lemma}[on distortion of arches]
Suppose that we are given an arch $J\subset S^1$ and a word $\bar{\alpha}=\alpha_{-n}\ldots\alpha_0\ldots\alpha_{n-1}$. Then there exist words $$\bar{\beta}=\beta_{-m}\ldots\beta_{-n-1}\alpha_{-n}\ldots\alpha_0\ldots\alpha_{n-1}\beta_{n}\ldots\beta_{m-1}$$
and $$\bar{\beta}^\prime=\beta^\prime_{-m^\prime}\ldots\beta^\prime_{-n-1}\alpha_{-n}\ldots\alpha_0\ldots\alpha_{n-1}\beta^\prime_{n}\ldots\beta^\prime_{m^\prime-1}$$ 
such that 
\begin{enumerate}
\item[(9.a)] the words that were added to the word $\bar{\alpha}$ can not contain less than $T$ zeros in a row;
\item[(9.b)] if $\omega\in C_{\bar{\beta}}$, then 
\begin{equation}
	\label{7.4} 
	\bar{f}_m[\omega](J)\subset \bar{W}^-\quad\textrm{and}\quad W^-\subset\bar{f}_{-m}[\omega](J),
  \tag{7.4}
	\end{equation}
	\begin{equation}
	\label{7.5}
	|(\bar{f}_m[\omega])^{\prime}|_{J}|<1,\qquad |(\bar{f}_{-m}[\omega])^{\prime}|_{(\bar{f}_{-m}[\omega])^{-1}(W^-)}|>1;
  \tag{7.5}
	\end{equation}
\item[(9.c)] if $\omega^\prime\in C_{\bar{\beta^\prime}}$, then 
\begin{equation}
	\label{7.6}
	\bar{f}_{-m^\prime}[\omega^\prime](J)\subset \bar{W}^+\quad\textrm{and}\quad W^+\subset\bar{f}_{m^\prime}[\omega^\prime](J),
  \tag{7.6}
	\end{equation}
	\begin{equation}
	\label{7.7}
	|(\bar{f}_{-m^\prime}[\omega^\prime])^{\prime}|_{J}|<1,\qquad |(\bar{f}_{m^\prime}[\omega^\prime])^{\prime}|_{(\bar{f}_{m^\prime}[\omega^\prime])^{-1}(W^+)}|>1.
	\tag{7.7} 
	\end{equation}
\end{enumerate}  
\end{lemma}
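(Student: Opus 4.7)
The plan is to build $\bar\beta$ and $\bar\beta'$ by greedy extension of $\bar\alpha$ on both sides, closely mirroring the construction of Lemma 3.4 from \cite{4}, but respecting the ``no fewer than $T$ zeros in a row'' constraint imposed by (9.a). I focus on $\bar\beta$ and item (9.b); the construction of $\bar\beta'$ for (9.c) is completely analogous after exchanging the roles of $W^+$ and $W^-$ and of forward and backward iteration.

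To extend $\bar\alpha$ on the right, I append a finite sequence of \emph{blocks}, each of which is either a run of exactly $T$ zeros or a single symbol $1$. Then runs of zeros automatically have length a multiple of $T$, while runs of $1$'s of any length are permitted, so (9.a) is preserved. A zero-block acts on the circle as a near-rotation by angle close to $Tb$ and changes arch lengths by a factor close to $(1\pm\delta)^T$; a $1$-block applied to an arch contained in $W^-$ contracts lengths by a factor close to $1/a$. Starting from $J_0:=\bar f_n[\omega](J)$ (well-defined up to the uniform error $\gamma$ by Lemma~7), I first insert $O(1/(Tb))$ zero-blocks to reposition $J_0$ inside $W^-$, and then append enough $1$-blocks to shrink the resulting image to length below $3\gamma$. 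Combined with the uniform error $\gamma$ from Lemma~7, this forces $\bar f_m[\omega](J)\subset\bar W^-$ for every $\omega\in C_{\bar\beta}$, which is the first inclusion in $(\ref{7.4})$; the derivative bound $|(\bar f_m[\omega])'|_J|<1$ of $(\ref{7.5})$ follows by making the number of $1$-blocks large enough so that the product of $\sim 1/a$ contractions overwhelms the bounded $(1\pm\delta)^T$ factors coming from zero-blocks.

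For the left half of $\bar\beta$ I design $\beta_{-m}\ldots\beta_{-n-1}$ so that backward iteration expands $J$ to cover $W^-$. Starting from $J_-:=\bar f_{-n}[\omega](J)$ and using the same block structure (now appended on the left), I first use zero-blocks to reposition $J_-$ inside $W^-$, exploiting that $f_\omega^{-1}$ with $\omega_0=0$ is a near-rotation by angle $\sim -b$. Once $J_-$ sits in $W^-$, each subsequent $1$-block applies $f_1^{-1}$ and expands lengths by factor $\sim a$ on $W^-$, so a bounded number of $1$-blocks stretches the backward image across all of $W^-$, giving the second inclusion of $(\ref{7.4})$ and the second derivative bound of $(\ref{7.5})$. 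Exchanging $W^-\leftrightarrow W^+$ and forward $\leftrightarrow$ backward throughout yields $\bar\beta'$: its left part contracts $J$ backward into $\bar W^+$ via $f_1^{-1}$ (which contracts on $W^+$), and its right part expands $J$ forward to cover $W^+$ via $f_1$ (which expands on $W^+$). This gives $(\ref{7.6})$ and $(\ref{7.7})$.

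The main obstacle is controlling the angular drift and length distortion introduced by replacing a single zero in the algorithm of \cite{4} by a $T$-zero block. Each such block perturbs the ideal one-symbol move by an extra angular shift of at most $(T-1)b$ and a length factor $(1\pm\delta)^{T-1}$; since we are free to take $\delta$ arbitrarily small depending on $T$ (and $T$ was fixed once the periodic points $p_1,\ldots,p_4$ were chosen), both perturbations stay well within the safety margin $\gamma$ built into the nested inclusions $N(3\gamma,\bar W^\pm)\subseteq W^\pm$ and well below the length $1/4-\delta$ of $W^\pm$. With this observation every positioning and contraction/expansion step of the Lemma~3.4 argument of \cite{4} carries through essentially unchanged, and the derivative bounds follow as byproducts of the corresponding multiplicative estimates.
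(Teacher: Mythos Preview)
Your argument for the second inclusion in $(\ref{7.4})$, namely $W^-\subset\bar f_{-m}[\omega](J)$, has a genuine gap. You propose to rotate $J_-:=\bar f_{-n}[\omega](J)$ into $W^-$ and then append $1$-blocks, claiming that ``$f_1^{-1}$ expands lengths by factor $\sim a$ on $W^-$, so a bounded number of $1$-blocks stretches the backward image across all of $W^-$.'' This fails whenever $J_-$ is short and does not contain the attractor $p$. Indeed, $W^-\subset f_1^{-k}(J_-)$ is equivalent to $f_1^{k}(W^-)\subset J_-$, and since $f_1^{k}(W^-)\to\{p\}$ this is impossible for large $k$ unless $p$ lies in $J_-$. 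A tiny arch placed inside $W^-$ but missing $p$ will, under iteration of $f_1^{-1}$, simply drift toward the attractor $q$ of $f_1^{-1}$ without ever covering $W^-$. A milder problem afflicts your forward step ``reposition $J_0$ inside $W^-$'': this is only possible when $|J_0|\le|W^-|$, which is not guaranteed.

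The paper's proof repairs both issues by a preliminary step you omit entirely: it first applies Lemma~8 to the endpoints $\phi_1,\phi_2$ of $J$, extending $\bar\alpha$ to a word $\bar\beta_1$ of half-length $k_1$ for which the sets $V_{\pm k_1}(\phi_1)$ and $V_{\pm k_1}(\phi_2)$ are at least $2b$ apart. This guarantees that both the backward image $X_{-l}$ of $J$ and the forward image $Y_l$ of its complement have length at least $2b-O(\gamma)$, wide enough to be positioned by zero-block rotations of step $\sim (ST+1)b$. The positioning targets are then different from yours: on the backward side one rotates until $P\subset X_{-l}$ (condition~(D2)), and on the forward side until $Q\subset Y_l$ (condition~(D1)). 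Only after (D1) and (D2) are secured does one append blocks of ones; now the backward image genuinely contains the repeller $p$ of $f_1^{-1}$ and therefore expands outward to cover $W^-$, and dually the forward image lies in the basin of $P$ and contracts into $\bar W^-$. Without Lemma~8 and these correct positioning criteria your greedy scheme can fail.
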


\begin{proof}
Construct the word $\bar{\beta}$ with properties (9.a) and (9.b). Denote by $\phi_1$ and $\phi_2$ the ends of $J$. By Lemma 8, the word $\bar{\alpha}$ can be transformed into a word $\bar{\beta_1}=\beta_{-k_1}\ldots\beta_{k_1-1}$ in such a way that it would satisfy the statement of Lemma 8, i.e., the distances between the sets $V_{k_1}(\phi_1)$ and  $V_{k_1}(\phi_2)$, and between the sets $V_{-k_1}(\phi_1)$ and $V_{-k_2}(\phi_2)$ are not less than $2b$.

By definition, put 
$$X_{-l}:=\bigcap_{\omega\in\Gamma_l}\bar{f}_{-l}[\omega](J)\quad\textrm{for }l\geq k_1,$$
$$Y_{l}:=\bigcap_{\omega\in\Gamma_l}\bar{f}_{l}[\omega](S^1-\bar{J})\quad\textrm{for }l\geq k_1;$$
i.e., $X_{-l}$ is the interval between the sets $V_{-l}(\phi_1)$ and $V_{-l}(\phi_2)$ contained in the images of the arch $J$ by the mapping $\bar{f}_{-l}[\omega]$, where $\omega$ ''runs through'' $\Gamma_l$; and $Y_l$ is the interval between $V_l(\phi_1)$ and $V_l(\phi_2)$ contained in the images of the arch $S^1-\bar{J}$ by the diffeomorphism $\bar{f}_l[\omega]$, where $\omega$ ''runs through'' the set $\Gamma_l$ (we denote by $\bar{J}$ the closure of the arch $J$ and by the symbol ''$-$'' the set difference).

We transform the word $\bar{\beta}_{1}$  into the word $\bar{\beta}_2=\beta_{-k_2}\ldots\beta_{k_2-1}$ in such a way that it satisfies item (9.a) and inclusions 
$$Q\subset Y_{k_2}\qquad\textrm{and}\qquad P\subset X_{-k_2}.$$
For this purpose, we add by induction (like in Lemma 8) symbols both from the left and from the right. Let $l$ be the induction parameter, the case $l=k_1$ is the induction base.

Check two following conditions:
$$Q\subset Y_l\qquad (\textrm{D}1)\qquad\textrm{and}\qquad P\subset X_{-l}\qquad (\textrm{D}2).$$
If both conditions hold, the construction is completed. Otherwise, we do the following: if condition (D1) (condition (D2), respectively) holds, then we add from the right (from the left) $ST+1$ ones, otherwise, we add from the right (from the left) $ST+1$ zeros.  

Let us show that this algorithm stops after a finite number of iterations.

\begin{proclaim}{Proposition 2} If condition (D1) or (D2) holds on some iteration, then it will be satisfied up to the end of the construction.
\end{proclaim}
 
The essence of this proposition can be formulated as follows: addition of any number of ones can not ''hurt'' this conditions. The proof of Proposition~2 (as well as its formulation) is a word-by-word repetition of the proof of Proposition~3.4 from the paper \cite{4}, that is why we omit it. 

\begin{proclaim}{Proposition 3}
Each of the conditions (D1) and (D2) holds on some iteration.
\end{proclaim}

\begin{proof}[Proof of Proposition 3]
Suppose that condition (D1) never holds; the case of condition (D2) can be treated in the same way. The algorithm described above defines a certain sequence $\omega$, and, by Proposition 2, $\omega_l=0$ for $l\geq k_1$. Hence, the mappings $f_{\sigma^l(\omega)}$ are close to the rotation by angle $b$ for $l\geq k_1$. In addition, by the choice of $k_1$, the distance between the sets $V_{k_1}(\phi_1)$ and $V_{k_1}(\phi_2)$ is not less than $2b$. Hence, 
$$\textrm{diam}(\bar{f}_{k_1}[\omega](S^1-\bar{J}))\geq 2b.$$
But then, if $\delta$ is sufficiently small, one of the arches 
$$\bar{f}_{k_1}[\omega](S^1-\bar{J}),\ \bar{f}_{k_1+ST+1}[\omega](S^1-\bar{J}),\ldots,\bar{f}_{k_1+(S+1)ST+S+1}[\omega](S^1-\bar{J})$$ 
covers the set $Q$ in such a way that the distance from $Q$ to its ends is more than $\gamma$. Let it be the arch $\bar{f}_{k_1+tST+t}[\omega](S^1-\bar{J})$. By Lemma 7, the sets $V_{k_1+tST+t}(\phi_1)$ and $V_{k_1+tST+t}(\phi_2)$ are contained in the $\gamma$-neighborhoods of the ends of this arch. That is why $Q\subset Y_{k_1+tST+t}$, and condition (D1) holds.
\end{proof}

Thus, we have constructed the word $\bar{\beta}_2$ of length $2k_2$ that satisfies the analogs of condition (9.a) from the formulation of the lemma and conditions (D1) and (D2). To finish the proof, one should repeat the reasoning from the end of the proof of Lemma 3.4 from the paper \cite{4} with minor changes. Lemma~9 is proved.
\end{proof}

Now, when Lemma 9 is proved, we can finish the proof of item (6.c).

\subsection{Item (6.c): end of the proof}

Note that sets of form $C_{\alpha}\times J\subset\Sigma^2\times S^1$, where $J\subset S^1$ is an arch and $\alpha$ is a word, form a base of topology in the set $\Sigma^2\times S^1$. Suppose that, as above, $\omega_3$ is a periodically with period $T_3$ repeating word in the sequence $pr p_3$. Choose so large number $2m$ and so small arch $J$ that
$$p_3\subset C_{\omega_3\ldots\omega_3}\times J\subset N(d,p_3),$$
where the word $\omega_3$ is repeated precisely $m$ times before the zero position and precisely $m$ times after. Next, we apply item (9.c) from Lemma 9. Suppose that $\bar{\beta}$ is the word from item (9.c) of Lemma 9. By $\omega$ denote an infinite sequence in which the word $\bar{\beta}$ is repeated periodically (and $\omega\in C_{\bar{\beta}}$). For any point $\phi\in S^1$ Lemma 5 can be applied to the point $(\omega,\phi)\in\Sigma^2\times S^1$ (since property (9.a) holds). Hence, condition $(\ref{7.1})$ holds for any point $s=(\omega,\phi)$. 

The paper \cite{4} proves that there exists a point $\phi_0\in S^1$ such that the point $s:=(\omega,\phi_0)$ is a hyperbolic periodic point of type $(1,2)$, and conditions $(\ref{7.6})$ and $(\ref{7.7})$ hold. Item (6.c) is proved.

\subsection{Proof of the remaining items of Lemma 6}

We give only the proof of item (6.d). Note that the points $p_1$, $p_2$, $p_3$ and $p_4$ are preserved for the considered mild skew products $G$, and there was a lot of freedom in the choice of this points (indeed, only hyperbolicity and condition $(\ref{5.1})$ on periods were required). In Subsec. 7.1--7.3 there was given a sufficiently detailed description of described in~\cite{4} procedure for construction of hyperbolic periodic points $p=(\omega,\phi)$ of different types that satisfy conditions $(\ref{7.4})$ and $(\ref{7.5})$, or $(\ref{7.6})$ and $(\ref{7.7})$, respectively, (depending on the type of the periodic point). This procedure allows to construct points of arbitrary large periods. That is why it can be assumed that the points $p_1$ and $p_4$ were initially constructed by such procedure for the step skew product $G_0$ and then fixed. In this case, items (6.a) and (6.b) are consequences of item (6.d). Thus, it is enough to prove only item (6.d).

Let us give a brief outline of the proof scheme of item (6.d). At first, we construct a pseudotrajectory $\omega\in\Sigma^2$ such that 
$$\omega\in W^u(pr s)\cap W^s(pr p_4)\quad\textrm{and}\quad O(\omega, \sigma)\cap (pr V_1\cup pr V_2)=\emptyset;$$
the sequence $\omega$ ''includes'' a certain subsequence of the sequence $pr s$ before the zero position, ''includes'' zeros from the zero position to the $(\bar{K}-1)$-position and ''includes'' a certain subsequence of the sequence $pr p_4$ after the $\bar{K}$-position (the number $\bar{K}$ is an arbitrary number at the moment, further it will be chosen implicitly). Then, we prove that for the constructed sequence $\omega$ there exists a point $\phi\in S^1$ such that the trajectory of the point $(\omega,\phi)$ ''goes'' from the point $s$ to the point $p_4$. Next, we apply Proposition 5 (an analog of Lemma5) and see that the trajectory of the point $y:=(\omega,\phi)$  does not intersect the cylinder neighborhoods $V_1$ and $V_2$ of the sets $O(p_1,G)$ and $O(p_2,G)$. Thus, the statement of item (6.d) holds for the point $y$. 

Choose arbitrary numbers $\bar{K}\in\textbf{Z}$ and $m\in\textbf{N}$. We say that two sequences $\omega=\{\beta_k\}_{k\in\textbf{Z}}$ and $\bar{\omega}=\{\bar{\beta}_k\}_{k\in\textbf{Z}}$ coincide on the interval $[\bar{K}-m,\bar{K}+m-1]$ if the relation
\begin{equation}
\label{7.8}
\beta_k=\bar{\beta}_k\quad\textrm{for }\bar{K}-m\leq k\leq \bar{K}+m-1
\tag{7.8}
\end{equation}
holds. We need the following statement that is, formally, a generalization of Lemma 7 on errors. Its proof is trivial.

\begin{proclaim}{Proposition 4}
If the sequences $\omega,\bar{\omega}\in\Sigma^2$ satisfy relation $(\ref{7.8})$, then the inequality 
\begin{equation}
\label{7.9}
d_{S^1}(\bar{f}_{\bar{K}\pm m}[\omega](\bar{\phi}_1),\bar{f}_{\bar{K}\pm m}[\bar{\omega}](\bar{\phi}_2))\leq\gamma
\tag{7.9}
\end{equation}
holds, where $\gamma$ is the constant from Lemma 7, and points $\bar{\phi}_1,\bar{\phi}_2\nobreak\in\nobreak S^1$ are defined by the equalities
\begin{equation}
\label{7.10}
\bar{\phi}_1:=(\bar{f}_{\bar{K}}[\omega])^{-1}(\phi)\in S^1,\quad\bar{\phi}_2:=(\bar{f}_{\bar{K}}[\bar{\omega}])^{-1}(\phi)\in S^1.
\tag{7.10}
\end{equation}
\end{proclaim}

Recall that the number $T$ is defined by equality $(\ref{5.2})$. Denote by $t_p$ the period of the point $p_4$ and by $t_s$ the period of the point $s$. Note that $t_s>T$. We can assume that $p_4=(\alpha^p,\phi_p)$, and a word $\alpha^p_1\ldots\alpha^p_{t_p}$ is repeated periodically in the sequence $\alpha^p$ in such way that the symbol $\alpha_1^p$ stands at the zero position of the sequence~$\alpha^p$. Recall that the points $s$ and $p_4$ are repellers on the fibres. In Subsec.~6.3, we chose the maximal arch $W^+$ such that
$$N(3\gamma,\bar{W}^+)\subset W^+.$$
Since the point $p_4=(\alpha^p,\phi_p)$ is periodic,
$$\bar{f}_{t_p}[\alpha^p](\phi_p)=\phi_p.$$
From the construction of the point $s$ (by item (9.c) from Lemma 9) it follows that the analogs of relations $(\ref{7.6})$ and $(\ref{7.7})$ hold for the sequence $\omega^s=pr s$, the arch $J$ and the certain number $m_s$ (defined in Lemma 9). Hence, if $\bar{s}=(\alpha^s,\phi_s)=G^{m_{s}}(s)$, and a word $\alpha^s_1\ldots\alpha^s_{t_s}$ is repeated periodically in the sequence $\alpha^s$ in such way that the symbol $\alpha^s_{t_s}$ stands at the $(-1)$-position of the sequence $\alpha^s$, then the relations 
$$\phi_s\subset \bar{W}^+\subset W^+,\quad \bar{f}_{-t_s}[\alpha^s](W^+)\subset \bar{W}^+\subset W^+,\quad
|(\bar{f}_{-t_s}[\alpha^s])^{\prime}|_{W^+}|<1$$
hold. This relations mean that the arch $(\alpha^s,W^+)$ is contained in the repelling domain of the point $\bar{s}$ with respect to the mapping $G^{t_s}$, i.e., the repelling domain on fibres of the point $(\alpha^s,\phi_s)$ (the repelling domain of the point $\phi_s$ for the restriction of the mapping $G^{t_s}$ on the set $(\alpha^s,S^1)$))  contains the arch~$W^+$.

Since the point $\bar{s}=(\alpha^s,\phi_s)$ is periodic, 
$$\bar{f}_{-t_s}[\alpha^s](\phi_s)=\phi_s.$$
Choose a set $\Delta_p$ which is a neighborhood of the point $p_4$ such that if 
$$O_{+}(p,G^{t_p})\subset\Delta_p$$
for some point $p$, then $p\in W^s(p_4)$.

Assume that the sequence $\omega$ is such that
\begin{enumerate} 
\item it includes a word $\alpha^s_1\ldots\alpha^s_{t_s}$ from the $(-t_s)$-position to the $(-1)$-position, and this word is further periodically repeated in the subsequence $\omega_{k<0}$ (the sequence $\omega$ can be considered as the mapping $\omega:\textbf{Z}\mapsto M$, then $\omega|_{A}$ is the restriction of the mapping $\omega$ to a set $A$);
\item it includes a word $\alpha^p_1\ldots\alpha^p_{t_p}$ from the $\bar{K}$-position to the $(\bar{K}+t_p-1)$-position, and this word is further periodically repeated in the subsequence $\omega|_{k\geq \bar{K}}$, where $\bar{K}\in\textbf{N}$ is a certain number, which will be chosen later.   
\end{enumerate}
    
\begin{lemma}
\begin{enumerate}
\item[(10.a)] For any number $m\in\textbf{N}$ there exist a point $\phi^m_{\omega}$ and an arch $J^m_{\omega}$ such that 
\begin{equation}
	\label{7.11}
	\phi^m_{\omega}\in N(\gamma,\phi_p);
	\tag{7.11}
	\end{equation}
	\begin{equation}
	\label{7.12}
	\bar{W}^+\subset N(\gamma,J^m_{\omega})\quad\textrm{and}\quad J^m_{\omega}\subset N(\gamma,\bar{W}^+)\subset W^+;
  \tag{7.12}
  \end{equation}	
\begin{equation}
\label{7.13}
d_{S^1}(\bar{f}_{wt_p}[\sigma^{\bar{K}}(\omega)](\phi^m_{\omega}),\phi_p)\leq\gamma\quad\textrm{for }0\leq w\leq 2m;
\tag{7.13}
\end{equation}
	\begin{equation}
	\label{7.14}
	\bar{f}_{-wt_s}[\omega](J^m_{\omega})\subset N(\gamma,\bar{f}_{-wt_s}[\alpha^s](\bar{W}^+))\subset W^+\quad\textrm{for }0\leq w\leq 2m.
  \tag{7.14}
  \end{equation}
\item[(10.b)] If $\phi_{\omega}$ is one of the limit points of the sequence $\phi^m_{\omega}$, and an arch $J_{\omega}$ is a ''limit arch'' of the sequence $J^m_{\omega}$ (the meaning of this term will be clarified in the proof of the lemma), then the relations
\begin{equation}
\label{7.15}
(\sigma^{\bar{K}}(\omega),\phi_{\omega})\in W^s(p_4),\quad (\omega, J_{\omega})\subset W^u(\bar{s})
\tag{7.15}
\end{equation}
hold.
\end{enumerate}
\end{lemma}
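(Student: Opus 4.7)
The plan is to build $\phi^m_\omega$ by pulling the fibre repeller $\phi_p$ of the period map $A := \bar f_{t_p}[\alpha^p]$ back $m$ periods along the actual cocycle of $G$ over $\omega$, and symmetrically to build $J^m_\omega$ by pushing the arch $\bar W^+$ (first pulled back $m$ periods along $\alpha^s$) forward $m$ periods along $\omega$. Centring each pullback at the midpoint of the relevant window of agreement between $\omega$ and the reference periodic sequence is exactly what lets Lemma 7 supply the uniform $\gamma$-bound at every intermediate iterate.

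Explicitly, set
$$\phi^m_\omega := (\bar f_{m t_p}[\sigma^{\bar K}(\omega)])^{-1}(\phi_p) = \bar f_{-m t_p}[\sigma^{\bar K + m t_p}(\omega)](\phi_p).$$
Since $\omega$ agrees with the periodic extension of $\alpha^p$ at every position $\geq \bar K$, the shifted sequence $\sigma^{\bar K + m t_p}(\omega)$ agrees with $\alpha^p$ on $[-m t_p, \infty)$, so $d_{\Sigma^2}(\sigma^{\bar K + m t_p}(\omega), \alpha^p) \leq 2^{-m t_p}$. Lemma 7 then gives $d_{C^0}(\bar f_{\pm j}[\sigma^{\bar K + m t_p}(\omega)], \bar f_{\pm j}[\alpha^p]) \leq \gamma$ for every $0 \leq j \leq m t_p$. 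Evaluating at $\phi_p$ (which $\bar f_{\pm j t_p}[\alpha^p]$ fixes) together with the cocycle identity $\bar f_{w t_p}[\sigma^{\bar K}(\omega)](\phi^m_\omega) = \bar f_{(w-m) t_p}[\sigma^{\bar K + m t_p}(\omega)](\phi_p)$ yields (7.11) at $w = 0$ and (7.13) for every $0 \leq w \leq 2m$, because $|w-m| t_p \leq m t_p$ stays inside the range where Lemma 7 supplies the bound.

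The arch $J^m_\omega$ is constructed symmetrically: put
$$J^m_\omega := \bar f_{m t_s}[\sigma^{-m t_s}(\omega)]\bigl(\bar f_{-m t_s}[\alpha^s](\bar W^+)\bigr).$$
Since $\omega$ agrees with the periodic extension of $\alpha^s$ at every negative position, $d_{\Sigma^2}(\sigma^{-m t_s}(\omega), \alpha^s) \leq 2^{-m t_s}$, and Lemma 7 yields the analogous $\gamma$-closeness of $\bar f_{\pm j}[\sigma^{-m t_s}(\omega)]$ and $\bar f_{\pm j}[\alpha^s]$ for $0 \leq j \leq m t_s$. The cocycle identity
$$\bar f_{-w t_s}[\omega] \circ \bar f_{m t_s}[\sigma^{-m t_s}(\omega)] = \bar f_{(m-w) t_s}[\sigma^{-m t_s}(\omega)],$$
combined with the periodicity $\sigma^{-m t_s}(\alpha^s) = \alpha^s$, transports the $\gamma$-bound to the required Hausdorff estimate on images, giving both inclusions in (7.12) at $w = 0$ and the full bound (7.14) at every $0 \leq w \leq 2m$; the chain $\bar f_{-w t_s}[\alpha^s](\bar W^+) \subset \bar W^+ \subset W^+$ is automatic from the contracting behaviour of $\bar f_{-t_s}[\alpha^s]$ on $W^+$ provided by item (9.c) of Lemma 9, and $N(\gamma, \bar W^+) \subset W^+$ is the defining property of $\bar W^+$ fixed in Subsec. 7.1.

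For part (10.b), a standard compactness extraction yields a subsequence $m_j \to \infty$ along which $\phi^{m_j}_\omega \to \phi_\omega$ in $S^1$ and $J^{m_j}_\omega \to J_\omega$ in the Hausdorff metric on closed subarches of $W^+$. The finite-range bounds (7.13) and (7.14) pass to the limit for every $w \geq 0$, so the forward $G$-orbit of $(\sigma^{\bar K}(\omega), \phi_\omega)$ stays within $\gamma$ of $O(p_4, G)$ and the backward $G$-orbit of every point of $(\omega, J_\omega)$ stays within $\gamma$ of $O(\bar s, G)$. Combined with the automatic base convergences $\sigma^n(\sigma^{\bar K}(\omega)) \to O(\alpha^p, \sigma)$ and $\sigma^{-n}(\omega) \to O(\alpha^s, \sigma)$ (immediate from the one-sided agreement of $\omega$) and with the smallness of $\gamma$ relative to the local stable/unstable manifold radii of $p_4$ and $\bar s$, this yields (7.15). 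The main obstacle is bookkeeping: making a single $\gamma$-estimate from Lemma 7 suffice for every intermediate $w \in [0, 2m]$, which works precisely because the midpoint-centred pullback reduces every intermediate iterate to a cocycle of length at most $m t_p$ (respectively $m t_s$), matching the one-sided width of agreement between $\omega$ and the periodic reference sequence; a naive pullback of full length $2m t_p$ at one endpoint would force intermediate iterates outside the range of Lemma 7 and would require a separate hyperbolic shadowing argument to absorb the cumulative error.
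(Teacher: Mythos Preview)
Your proof is correct and follows essentially the same approach as the paper: you define $\phi^m_\omega = (\bar f_{m t_p}[\sigma^{\bar K}(\omega)])^{-1}(\phi_p)$ and $J^m_\omega = (\bar f_{-m t_s}[\omega])^{-1}(\bar f_{-m t_s}[\alpha^s](\bar W^+))$ exactly as the paper does, and you obtain the uniform $\gamma$-bounds at all intermediate $w\in[0,2m]$ via the midpoint-centred application of Lemma~7, which is precisely what the paper packages as Proposition~4. The treatment of item~(10.b) by extracting a Hausdorff-limit subsequence and invoking local stable/unstable manifold sizes also matches the paper's argument.
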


\begin{proof}
We start from the proof of item (10.a). Choose an arbitrary number $m\in\textbf{N}$ and an arbitrary number $0\leq k< m$.

Put $\bar{L}=mt_p$. Consider the sequence $\alpha^p$. Next, we apply Proposition 4 to the ''interval'' $[\bar{L}-(m-k)t_p,\bar{L}+(m-k)t_p-1]=[kt_p,2mt_p-kt_p-1]$ and sequences $\sigma^{\bar{K}}(\omega)$ and $\alpha^p$, which coincide on this interval, by construction. Put $\phi^m_{\omega}=(\bar{f}_{mt_p}[\sigma^{\bar{K}}(\omega)])^{-1}(\phi_p)$. By construction,
$$\bar{f}_{t_p}[\alpha^p](\phi_p)=\phi_p.$$
Hence, by inequalities $(\ref{7.9})$ and $(\ref{7.10})$, 
\begin{equation}
\label{7.16}
d_{S^1}(\bar{f}_{kt_p}[\sigma^{\bar{K}}(\omega)](\phi^m_{\omega}),\bar{f}_{kt_p}[\alpha^p](\phi_p))\leq\gamma,
\tag{7.16}
\end{equation}
\begin{equation}
\label{7.17}
d_{S^1}(\bar{f}_{2mt_p-kt_p}[\sigma^{\bar{K}}(\omega)](\phi^m_{\omega}),\bar{f}_{2mt_p-kt_p}[\alpha^p](\phi_p))\leq\gamma
\tag{7.17}
\end{equation}
for any $0\leq k< m$. Next, we set $k=0$ in inequality $(\ref{7.16})$ and get inclusion $(\ref{7.11})$. Inequalities $(\ref{7.13})$ for $w\neq m$ follow from inequalities $(\ref{7.16})$ and $(\ref{7.17})$. Inequalities $(\ref{7.13})$ for $w=m$ hold, by construction. 

Put $\bar{L}=-mt_s$. Next, we apply Proposition 4 to the ''interval''
$$[\bar{L}-(m-k)t_s,\bar{L}+(m-k)t_s-1]=[-2mt_s+kt_s,-kt_s-1]$$
and the sequences $\omega$ and $\alpha^s$, which coincide on this interval, by construction. Put $V^+_k=\bar{f}_{-kt_s}[\alpha^s](\bar{W}^+)$ and $J^m_{\omega}=(\bar{f}_{-mt_s}[\omega])^{-1}(V^+_m)$. It is clear that the set $J^m_{\omega}$ is an arch. By construction, the set $(\alpha^s,W^+)$ is contained in the repelling domain of the point $\bar{s}$, and, moreover, 
$$\bar{f}_{vt_s}[\alpha^s](W^+)\subset \bar{W}^+\subset W^+\quad\textrm{for } v\in\textbf{Z},v\leq0.$$
By construction of the arch $\bar{W}^+$,
$$V^+_{-v}=\bar{f}_{vt_s}[\alpha^s](\bar{W}^+)\subset \bar{W}^+\quad\textrm{for } v\in\textbf{Z},v\leq0.$$

Hence, by relations $(\ref{7.9})$ and $(\ref{7.10})$,
\begin{equation}
\label{7.18}
d_{H}(\bar{f}_{-2mt_s+kt_s}[\omega](J^m_{\omega}),V^+_{2m-k})\leq\gamma,
\tag{7.18}
\end{equation}
\begin{equation}
\label{7.19}
d_{H}(\bar{f}_{-kt_s}[\omega](J^m_{\omega}),V^+_k)\leq\gamma
\tag{7.19}
\end{equation}
for all $0\leq k< m$, where $d_H$ denotes the Hausdorf distance.

Next, we set $k=0$ in the inequality $(\ref{7.19})$ and get inclusion $(\ref{7.12})$. Inequalities $(\ref{7.14})$ for $w\neq m$ follow from inequalities $(\ref{7.18})$ and $(\ref{7.19})$. Inequalities $(\ref{7.14})$ for $w=m$ hold, by construction.

Let us prove item (10.b). Let $\phi_{\omega}$ be a limit point of the sequence $\phi_{\omega}^m$. Then, relations $(\ref{7.13})$ and inclusion $(\ref{7.11})$ hold for the point $\phi_{\omega}$ and an arbitrary number $w$. Let $j_1^m$~and~$j_2^m$ be the ends of the arch $J^m_{\omega}\subset \bar{W}^+$. Then, there exists a sequence $m_k$ such that $$j_e^{m_k}\longrightarrow j_e\qquad\textrm{for }e=\{0,1\},\ k\rightarrow+\infty,$$
where $j_1$ and $j_2$ are some points. Let $J_{\omega}$ be the arch between the points $j_1$ and $j_2$ that is contained in $W^+$. The arch $J_{\omega}$ is called the limit arch. Note that $J_{\omega}$ is the set of all limit points of the sequence $J^{m_k}_{\omega}$. The arch $J_{\omega}$ satisfies relations $(\ref{7.14})$ and inclusions $(\ref{7.12})$ for arbitrary $w$.

Put 
$$\bar{\Delta}_p=\{p_4+(x-p_4)/2|x\in\Delta_p\}.$$
Since the points $p_1$, $p_2$, $p_3$ and $p_4$ were ''fixed'' for all mild skew products from Theorem~$\textrm{A}^{\prime}$, we can assume that $\delta$ was chosen to be so small that 
$$N(2\gamma,\bar{\Delta}_p)\subset\Delta_p.$$
By relation $(\ref{7.13})$, the positive semitrajectory of the point $(\sigma^{\bar{K}}(\omega),\phi_{\omega})$ with respect to the mapping $G^{t_p}$ is contained in the neighborhood $\bar{\Delta}_p$ (for sufficiently small $\gamma$; recall that the point $p_4$ is fixed, and the point $\bar{s}$ is not fixed). Consequently,
$$(\sigma^{\bar{K}}(\omega),\phi_{\omega})\in W_{G^{t_p}}^s(p_4);\quad\textrm{hence,}\quad(\sigma^{\bar{K}}(\omega),\phi_{\omega})\in W^s(p_4).$$
Similarly, by relations $(\ref{7.14})$, the negative semitrajectories of the points of the arch $(\omega,J_{\omega})$ belong to a small neighborhood of the arch $(pr\bar{s},\bar{W}^+)$.

It was already noted above that the repelling domain of the point $\bar{s}$ on the fibres with respect to the mapping $G^{t_s}$ contains the arch $W^+$, and that is why the repelling domain of the point $\bar{s}$ with respect to the mapping $G^{t_s}$ is sufficiently large, to be precise, it contains a subset of the form
$$(V(pr\bar{s})\cap W_{\sigma}^u(pr\bar{s}),N(\gamma,\bar{W}^+)),$$
where $V(pr\bar{s})$ is a small neighborhood of the point $pr\bar{s}$ in the base, i.e., in the set $\Sigma^2$.

That is why relations $(\ref{7.14})$ imply the inclusion
$$(\omega,J_{\omega})\subset W_{G^{t_s}}^u(\bar{s}),\quad\textrm{hence,}\quad
(\omega,J_{\omega})\subset W^u(\bar{s}).$$ 
Lemma 10 is proved. 
\end{proof}

In particular, Lemma 10 means that relations $(\ref{7.15})$ hold for a certain point $\phi_{\omega}$ and a certain arch $J_{\omega}$. Let us define the symbols that are contained in the ''interval'' from the zero position to the $(\bar{K}-1)$-position in the sequence $\omega$. Recall that in Subsec. 7.2 we introduced the number $S=[1/(b-\delta)]$, where number $b$ is such that the diffeomorphism $g_0$ defined above is the rotation by the angle $b$. By inclusions~$(\ref{7.12})$, the arch $J_{\omega}$ is sufficiently large, consequently, one of the arches 
$$\bar{f}_{TS+1}[\omega](J_{\omega}), \ldots, \bar{f}_{kTS+k}[\omega](J_{\omega}), \ldots, \bar{f}_{(S+1)TS+S+1}[\omega](J_{\omega})$$
''covers'' the point $\phi_p$. Suppose that it happened for the arch $\bar{f}_{kTS+k}[\omega](J_{\omega})$. In this case, by definition, we put
$$\bar{K}:=kTS+k,$$
and define all symbols of the sequence $\omega$ in the interval from the zero position to the $(\bar{K}-1)$-position to be equal to zero.

By Lemma 10, $(\omega,J_{\omega})\subset W^u(\bar{s})$ and $(\sigma^{\bar{K}}(\omega),\phi_p)\in W^s(p_4)$; by construction,      
$(\sigma^{\bar{K}}(\omega),\phi_p)\in G^{\bar{K}}(\omega,J_{\omega})$;
consequently, 
$$(\sigma^{\bar{K}}(\omega),\phi_p)\in W^u(\bar{s})\cap W^s(p_4).$$
In order to finish the proof of Lemma 6, we need to prove only the following statement. The proof of this statement is similar with the proof of Lemma 5, that is why we omit it.

\begin{proclaim}{Proposition 5}
Suppose that a word $\beta_p$ is repeated periodically with period $T_4$ in the sequence $pr p_4$, $\beta_s=\alpha^s_1\ldots\alpha^s_{t_s}$ is the word constructed above which is periodically repeated in the sequence $pr s$, and $\theta$ is a word that consists of $k(TS+1)$ zeros for $k\geq 0$. If $\omega=\ldots\beta_s\ldots\beta_s\theta\beta_p\ldots\beta_p\ldots$,
then 
$$O(\omega,\sigma)\cap (pr V_1\cup pr V_2)=\emptyset.$$      
\end{proclaim}

Next, we apply Proposition 5 to the point $y:=(\sigma^{\bar{K}}(\omega),\phi_p)$ and see that the point $y$ satisfies all conditions of item (6.d). Lemma 6 is proved. Hence, Theorems A and $\textrm{A}^{\prime}$ are proved too.

\end{document}